\newtheorem{theorem}{Theorem}[section]
\newtheorem{lemma}[theorem]{Lemma}
\newtheorem{corollary}[theorem]{Corollary}
\newtheorem{definition}[theorem]{Definition}
\newtheorem{remark}[theorem]{Remark}
\newcommand{\R}{\mathbb{R}}
\begin{document}

\title{Existence of solution for Hilfer fractional differential equations
with boundary value conditions \thanks{%
Mathematics Subject Classifications: 34A08, 26A33, 34A12,34A40.}}

\date{}
\author{Mohammed S. Abdo\thanks{%
Department of Mathematics, Dr.Babasaheb Ambedkar Marathwada University,
Aurangabad, (M.S) \textrm{431001}, India}\ , Satish K. Panchal\thanks{%
Department of Mathematics, Dr.Babasaheb Ambedkar Marathwada University,
Aurangabad, (M.S) \textrm{431001}, India}\ , Sandeep P. Bhairat \thanks{%
Faculty of Engineering Mathematics, Institute of Chemical Technology Mumbai, Marathwada Campus, Jalna (M.S),
India. Corresponding author email: sp.bhairat@marj.ictmumbai.edu.in}}
\maketitle

\begin{abstract}
In this paper, we consider a class of nonlinear fractional differential
equations involving Hilfer derivative with boundary conditions. First, we
obtain an equivalent integral for the given boundary value problem in
weighted space of continuous functions. Then we obtain the existence results
for a given problem under a new approach and minimal assumptions on
nonlinear function $f$. The technique used in the analysis relies on a
variety of tools including Schauder's, Schaefer's and Krasnosel'ski's fixed
point theorems. We demonstrate our results through illustrative examples.
\end{abstract}

\section{Introduction}

Fractional calculus (FC) is playing an even vital role in applied
mathematics and engineering sciences, provoking a blurring of boundaries
between scientific disciplines and the real world applications by a
resurgence of interest in the modern as well as classical techniques of
applied analysis, see \cite{rms,kd,rh,hi,kst}. The development of FC is a natural consequence of a high
level of excitement on the research frontier in applied analysis.

Fractional differential equations (FDEs) naturally occurs in many situations
and are studied intensively with initial and boundary value conditions over
the last three decades. The existence of a solution for such initial value
problems (IVPs) and boundary value problems (BVPs) is crucial for further
qualitative studies and applications. In recent years, an increasing
interest in the analysis of Hilfer FDEs has been developed in the
literature \cite{as,SP1,SPN,SP5,DB1,DBN,sp1,db,fk,hlt,rk,rc,sup,zrs,zt,dv,wz,zx}. We mention here some works on Hilfer fractional differential equations.

One of the first works in this direction with an initial value condition was the paper by K. M. Furati et al. \cite{fk}. They studied the Hilfer FDE
\begin{equation}\label{f1}
D_{a^{+}}^{\alpha,\beta }y(x)=f \left(x,y(x)\right), \qquad x>a,\,0<\alpha<1,\,0\leq\beta\leq1,
\end{equation}
with the initial condition
\begin{equation}\label{f2}
I_{a^{+}}^{1-\gamma }y(a^+)=y_a, \quad y_a\in\R, \,\, \gamma=\alpha+\beta(1-\alpha),
\end{equation}
where $D_{a^{+}}^{\alpha,\beta}$ is Hilfer fractional derivative of order $\alpha\in(0,1)$ and type $\beta\in[0,1],$ and $I_{a^{+}}^{1-\gamma}$ is Riemann-Liouville fractional integral of order $1-\gamma.$ The existence and uniqueness of solution to IVP \eqref{f1}-\eqref{f2} is proved in weighted space of continuous functions by using Banach fixed point theorem. For details, see \cite{fk,zt}.

In the year 2015, J. Wang and Y. Zang investigated the existence of a solution to
nonlocal IVP for Hilfer FDEs:
\begin{align}\label{l1}
D_{a^{+}}^{\alpha,\beta }u(t)&=f \left(t,u(t)\right), \qquad 0<\alpha<1, 0\leq\beta\leq1  ,t\in (a,b],\\
I_{a^{+}}^{1-\gamma }u(a^+)&=\sum_{i=1}^{m}\lambda_{i}u(\tau_{i}),\qquad \tau _{i}\in (a,b],\ \alpha\leq\gamma=\alpha+\beta-\alpha\beta,\label{l11}
\end{align}%
For details, see \cite{wz}.

Later, H. Gu and J. J. Trujillo \cite{ht} studied the existence of mild solution of Hilfer evolution equation:
\begin{align}\label{l2}
D_{0^{+}}^{\nu,\mu }x(t)&=Ax(t)+f\left(t,x(t)\right),\qquad 0<\alpha<1, 0\leq\beta\leq1  ,t\in (0,b],\\
I_{0^{+}}^{1-\gamma}x(0)&=x_0,\qquad x_0\in\R,\,\gamma=\alpha+\beta-\alpha\beta.\label{l22}
\end{align}%
They utilized the method of noncompact measure and established sufficient conditions to ensure the existence of a mild solution to Hilfer evolution IVP \eqref{l2}-\eqref{l22}. The state $x(t)$ defined for the values in Banach space $X$ with the norm $|\cdot|$ and $A$ is infinitesimal generator of $C_0$ semigroups in $X.$

In 2016, in \cite{rc}, Rafal Kamoki et al. considered fractional Cauchy problem involving Hilfer derivative
\begin{align}\label{l3}
D_{a^{+}}^{\alpha,\beta }y(t)&=g \left(t,y(t)\right), \qquad 0<\alpha<1,0\leq\beta\leq1,t\in [a,b],b>a,\\
I_{a^{+}}^{1-\gamma }y(a)&=c,\qquad c\in {\R}^n, \gamma=\alpha+\beta-\alpha\beta,\label{l33}
\end{align}%
and proved the existence and uniqueness of its solution in the space of continuous functions by using Banach contraction theorem. They used Bielecki norm without partitioning the interval and obtained solutions to both homogeneous and nonhomogeneous Cauchy problems.

In recent two years, the series of works on Hilfer FDEs have been published. S. Abbas et al. \cite{as} surveyed the existence and stability for Hilfer FDEs of the form:
\begin{align}\label{l4}
D_{0}^{\alpha,\beta}u(t)&=f\left(t,y(t),D_{0}^{\alpha,\beta}u(t)\right), \qquad 0<\alpha<1,0\leq\beta\leq1,t\in [0,\infty),\\
I_{0}^{1-\gamma}u(0)&=\phi,\qquad \phi\in {\R}, \gamma=\alpha+\beta-\alpha\beta,\label{l44}
\end{align}%
with the uniform norm on weighted space of bounded and continuous functions. They discussed existence, uniqueness and asymptotic stability of solution to IVP by using Schauder's fixed point theorem. Further, they obtained Ulam-type stabilities for Hilfer FDEs in Banach spaces using the measure of noncompactness and Monch's fixed point theorem. They also derived some results on the existence of weak solutions to \eqref{f1}-\eqref{f2}.

Z. Gao and X. Yu \cite{zx} discussed the existence of a solution to Hilfer integral BVP for the relaxation FDEs:
\begin{align}\label{l5}
D_{0^{+}}^{\nu,\mu }x(t)&=cx(t)+f\left(t,x(t)\right),\qquad c<0,0<\nu<1, 0\leq\mu\leq1,t\in (0,b],\\
I_{0^{+}}^{1-\gamma }x(0^+)&=\sum_{i=1}^{m}\lambda_{i}x(\tau_{i}),\qquad \tau _{i}\in (0,b),\ 0\leq\gamma=\nu+\mu-\nu\mu, \label{l55}
\end{align}
By utilizing properties of Mittag-Leffler function and fixed point theory, they established three existence results for the solution of Hilfer integral BVP \eqref{l5}-\eqref{l55} similar to that of results in \cite{hlt}.

Bhairat et al. in \cite{db} generalized IVP \eqref{f1}-\eqref{f2} for $\alpha\in(n-1,n).$
%\begin{align}\label{l6}
%D_{a^{+}}^{\alpha ,\beta }y(x)&=f(x,y), \qquad n-1<\alpha<n,\,0\leq \beta\leq 1,\\
%I_{a^{+}}^{n-\gamma }y(a)&=b_{k},\qquad b_k\in\R, \gamma=\alpha+\beta(n-\alpha),\,n=-[-\alpha].
%\end{align}
First, they derived equivalent integral representation in weighted space of continuous functions. Then by employing the method of successive approximations, the existence, uniqueness and continuous dependence of the solution are obtained. Further, in \cite{sp1}, Bhairat studied the singular IVP for Hilfer FDE:
\begin{align}\label{l7}
D_{a^+}^{\alpha,\beta}x(t)=f(t,x(t)),&\quad 0<\alpha<1,\, 0\leq\beta\leq1,\quad t>{a},\\
\displaystyle\lim_{t\to{a^{+}}}{(t-a)}^{1-\gamma}x(t)=x_0,&\qquad \gamma=\alpha+\beta(1-\alpha),
\end{align}
Using properties of Euler's beta, gamma functions and Picard's iterative technique, the existence and uniqueness of solution to the singular IVP were obtained. Some existence results for Hilfer-fractional implicit differential equation with nonlocal initial conditions can be found in \cite{as,dv}.

Recently, Suphawat et al \cite{sup} studied the nonlocal BVP:
\begin{equation}\label{l8}
D^{\alpha,\beta}x(t)=f(t,x(t)),\quad 1<\alpha<2,\, 0\leq\beta\leq1,\quad t\in[a,b],
\end{equation}
with the integral boundary conditions
\begin{equation}\label{l9}
x(a)=0,\quad x(b)=\sum_{i=1}^{m}\delta_{i}I^{\phi_i}x(\xi_i),\qquad \phi_{i}>0,\,\delta_i\in\R,\, \xi_i\in[a,b].
\end{equation}
The Banach contraction mapping principle, Banach fixed point theorem with Holder inequality, nonlinear contractions, Krasnoselskii's fixed point theorem, nonlinear Leray-Schauder alternative are employed to prove the existence of the solution to integral BVP.

Motivated by aforesaid works, in this paper, we consider the following BVP for a class of Hilfer FDEs:
\begin{equation}
D_{a^{+}}^{\alpha ,\beta }z(t)=f\big(t,z(t)\big),\text{ \ }0<\alpha
<1,\,0\leq \beta \leq 1,t\in (a,b],\qquad \ \qquad \qquad   \label{e8.1}
\end{equation}%
\begin{equation}
I_{a^{+}}^{1-\gamma }\big[cz(a^{+})+dz(b^{-})\big]=e_{k},\text{\ \ \ }\gamma
=\alpha +\beta (1-\alpha ),\, e_k\in\R, \label{e8.2}
\end{equation}%
where, $f:(a,b]\times \mathbb{R}\rightarrow \mathbb{R}$ be a function such that $%
f(t,z)\in C_{1-\gamma }[a,b]$ for any $z\in C_{1-\gamma }[a,b]$ and $%
c,d,e_{k}\in \mathbb{R}$. We obtain several existence results by Schauder's, Schaefer's and Krasnosel’ski's fixed point theorems.

The paper is organized as follows: Some preliminary concepts related to our problem are listed in Section 2 which are useful in the sequel. In Section 3, we first establish an equivalent integral equation of BVP \eqref{e8.1}-\eqref{e8.2} and then study the existence results. Illustrative examples are provided in the last section.

\section{Preliminaries}

In this section, we list some definitions, lemmas and weighted spaces
which are useful in the sequel.

Let $-\infty <a<b<+\infty .$ Let $C[a,b],AC[a,b]$ and $C^{n}[a,b]$ be the
spaces of continuous, absolutely continuous, $n-$times continuous and
continuously differentiable functions on $[a,b],$ respectively. Here $%
L^{p}(a,b),p\geq 1,$ is the space of Lebesgue integrable functions on $%
(a,b). $ Further more we recall the following weighted spaces \cite{fk}:
\begin{gather*}
C_{\gamma }[a,b]=\{g:(a,b]\rightarrow \mathbb{R}:(t-a)^{\gamma }g(t)\in
C[a,b]\},\quad 0\leq \gamma <1, \\
C_{\gamma }^{n}[a,b]=\{g:(a,b]\rightarrow \mathbb{R},g\in
C^{n-1}[a,b]:g^{(n)}(t)\in C_{\gamma }[a,b]\},\,n\in \mathbb{N}.
\end{gather*}

\begin{definition}
(\cite{kst}) Let $g:[a,\infty )\rightarrow R$ is a real valued continuous
function. The left sided Riemann-Liouville fractional integral of $g$ of
order $\alpha >0$ is defined by
\begin{equation}
I_{a^{+}}^{\alpha }g(t)=\frac{1}{\Gamma (\alpha )}\int_{a}^{t}(t-s)^{\alpha
-1}g(s)ds,\quad t>a,  \label{d1}
\end{equation}%
where $\Gamma (\cdot )$ is the Euler's Gamma function and $a\in
%TCIMACRO{\U{211d} }%
%BeginExpansion
\mathbb{R}
%EndExpansion
.$ provided the right hand side is pointwise defined on $(a,\infty ).$
\end{definition}

\begin{definition}
(\cite{kst}) Let $g:[a,\infty )\rightarrow R$ is a real valued continuous
function. The left sided Riemann-Liouville fractional derivative of $g$ of
order $\alpha >0$ is defined by
\begin{equation}
D_{a^{+}}^{\alpha }g(t)=\frac{1}{\Gamma (n-\alpha )}\frac{d^{n}}{dt^{n}}%
\int_{a}^{t}(t-s)^{n-\alpha -1}g(s)ds,  \label{d2}
\end{equation}%
where $n=[\alpha ]+1,$ and $[\alpha ]$ denotes the integer part of $\alpha .$
\end{definition}

\begin{definition}
\label{7} (\cite{rh}) The left sided Hilfer fractional derivative of
function $g\in L^{1}(a,b)$ of order $0<\alpha <1$ and type $0\leq \beta
\leq 1$ is denoted as $D_{a^{+}}^{\alpha ,\beta }$ and defined by
\begin{equation}
D_{a^{+}}^{\alpha ,\beta }g(t)=I_{a^{+}}^{\beta (1-\alpha
)}DI_{a^{+}}^{(1-\beta )(1-\alpha )}g(t),\text{ }D=\frac{d}{%
dt}.  \label{d3}
\end{equation}%
where $I_{a^{+}}^{\alpha }$ and $D_{a^{+}}^{\alpha }$ are Riemann-Liouville
fractional integral and derivative defined by \eqref{d1} and \eqref{d2},
respectively.
\end{definition}

\begin{remark}
\label{rem8.a} From Definition \ref{7}, we observe that:

\begin{itemize}
\item[(i)] The operator $D_{a^{+}}^{\alpha ,\beta }$ can be written as
\begin{equation*}
D_{a^{+}}^{\alpha ,\beta }=I_{a^{+}}^{\beta (1-\alpha
)}DI_{a^{+}}^{(1-\gamma )}=I_{a^{+}}^{\beta (1-\alpha )}D^{\gamma
},~~~~~~~~\gamma =\alpha +\beta -\alpha \beta \text{.}
\end{equation*}

\item[(ii)] The Hilfer fractional derivative can be regarded as an
interpolator between the Riemann-Liouville derivative ($\beta =0$) and
Caputo derivative ($\beta =1$) as
\begin{equation*}
D_{a^{+}}^{\alpha ,\beta }=%
\begin{cases}
DI_{a^{+}}^{(1-\alpha )}=~D_{a^{+}}^{\alpha },~~~~~~~~~~if~\beta =0; \\
I_{a^{+}}^{(1-\alpha )}D=~^{c}D_{a^{+}}^{\alpha },~~~~~~~~if~\beta =1.%
\end{cases}%
\end{equation*}

\item[(iii)] In particular, if $0<\alpha <1,$ $0\leq \beta \leq 1$ and $%
\gamma =\alpha +\beta -\alpha \beta ,$ then%
\begin{equation*}
(D_{a^{+}}^{\alpha ,\beta }g)(t)=\Big(I_{a^{+}}^{\beta (1-\alpha )}\frac{d}{%
dt}\Big(I_{a^{+}}^{(1-\beta )(1-\alpha )}g\Big)\Big)(t).
\end{equation*}%
One has,
\begin{equation*}
(D_{a^{+}}^{\alpha ,\beta }g)(t)=\Big(I_{a^{+}}^{\beta (1-\alpha )}\Big(%
D_{a^{+}}^{\gamma }g\Big)\Big)(t),
\end{equation*}%
where $\Big(D_{a^{+}}^{\gamma }g\Big)(t)=\frac{d}{dt}\Big(%
I_{a^{+}}^{(1-\beta )(1-\alpha )}g\Big)(t).$
\end{itemize}
\end{remark}

\begin{definition}
(\cite{fk}) Let $0<\alpha <1,0\leq \beta \leq 1,$ the weighted space $%
C_{1-\gamma }^{\alpha ,\beta }[a,b]$ is defined by
\begin{equation}
C_{1-\gamma }^{\alpha ,\beta }[a,b]=\big\{g\in {C_{1-\gamma }[a,b]}%
:D_{a^{+}}^{\alpha ,\beta }g\in {C_{1-\gamma }[a,b]}\big\},\quad \gamma
=\alpha +\beta (1-\alpha ).  \label{w1}
\end{equation}%
Clearly, $D_{a^{+}}^{\alpha ,\beta }g=I_{a^{+}}^{\beta (1-\alpha
)}D_{a^{+}}^{\gamma }g$ and $C_{1-\gamma }^{\gamma }[a,b]\subset C_{1-\gamma
}^{\alpha ,\beta }[a,b],\,\gamma =\alpha +\beta -\alpha \beta $, $%
0<\alpha <1,0\leq \beta \leq 1.$ Consider the space $C_{\gamma }^{0}[a,b]$
with the norm
\begin{equation}
{\Vert g\Vert }_{C_{\gamma }^{n}}=\sum_{k=0}^{n-1}{\Vert g^{(k)}\Vert }_{C}+{%
\Vert g^{(n)}\Vert }_{C_{\gamma }}.  \label{n1}
\end{equation}
\end{definition}

\begin{lemma}
\label{def8.5} (\cite{kd}) If $\alpha >0$ and $\beta >0,$ and $g\in
L^{1}(a,b)$ for $t\in \lbrack a,b]$, then
\newline
$\Big(I_{a^{+}}^{\alpha }I_{a^{+}}^{\beta }g\Big)(t)=\Big(I_{a^{+}}^{\alpha
+\beta }g\Big)(t)$ and $\Big(D_{a^{+}}^{\alpha }I_{a^{+}}^{\beta }g\Big)%
(t)=g(t).$\newline
In particular, if $f\in C_{\gamma }[a,b]$ or $f\in C[a,b]$, then the above
properties hold for each $t\in (a,b]$ or $t\in \lbrack a,b]$ respectively.
\end{lemma}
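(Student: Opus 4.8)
The plan is to establish the two identities separately: first the additive (semigroup) law for the fractional integral, and then the left-inverse property of the Riemann--Liouville derivative, deducing the second from the first.

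For the semigroup law $(I_{a^{+}}^{\alpha }I_{a^{+}}^{\beta }g)(t)=(I_{a^{+}}^{\alpha +\beta }g)(t)$, I would begin by writing out both nested integrals from the definition \eqref{d1}:
\begin{equation*}
(I_{a^{+}}^{\alpha }I_{a^{+}}^{\beta }g)(t)=\frac{1}{\Gamma (\alpha )\Gamma (\beta )}\int_{a}^{t}(t-s)^{\alpha -1}\int_{a}^{s}(s-\tau )^{\beta -1}g(\tau )\,d\tau \,ds.
\end{equation*}
The key step is to interchange the order of integration by Fubini's theorem over the triangle $\{a\leq \tau \leq s\leq t\}$; this is legitimate because $g\in L^{1}(a,b)$ and, for $\alpha ,\beta >0$, the kernel $(t-s)^{\alpha -1}(s-\tau )^{\beta -1}$ is absolutely integrable on the triangle. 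After swapping, I would evaluate the inner $s$-integral by the substitution $s=\tau +(t-\tau )u$, $u\in \lbrack 0,1]$, which turns it into $(t-\tau )^{\alpha +\beta -1}$ times the Euler Beta integral $B(\beta ,\alpha )=\Gamma (\alpha )\Gamma (\beta )/\Gamma (\alpha +\beta )$. Substituting back collapses the constants and reproduces exactly the definition of $I_{a^{+}}^{\alpha +\beta }g$.

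For the left-inverse property, I would invoke the definition \eqref{d2} to write the Riemann--Liouville derivative as $D_{a^{+}}^{\alpha }=\frac{d^{n}}{dt^{n}}I_{a^{+}}^{n-\alpha }$ with $n=[\alpha ]+1$, and then compose with the integral of matching order:
\begin{equation*}
D_{a^{+}}^{\alpha }I_{a^{+}}^{\alpha }g=\frac{d^{n}}{dt^{n}}\,I_{a^{+}}^{n-\alpha }I_{a^{+}}^{\alpha }g=\frac{d^{n}}{dt^{n}}\,I_{a^{+}}^{n}g,
\end{equation*}
where the last equality is the semigroup law just proved (with both exponents $n-\alpha >0$ and $\alpha >0$). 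Since $I_{a^{+}}^{n}$ is ordinary $n$-fold iterated integration, $\frac{d^{n}}{dt^{n}}I_{a^{+}}^{n}g=g$, which gives the claim.

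I expect the main obstacle to be the rigorous justification of the Fubini interchange and of the final differentiation step at the level of $L^{1}$ functions: one must confirm that $I_{a^{+}}^{n}g$ is $n$-times differentiable in the required (absolutely continuous) sense, so that $\frac{d^{n}}{dt^{n}}$ genuinely undoes the $n$-fold integral. The refinement to $g\in C_{\gamma }[a,b]$ or $g\in C[a,b]$ asserted at the end then follows by observing that for such $g$ the integrals define (weighted-)continuous functions, so that the identities hold pointwise on $(a,b]$ or $[a,b]$ respectively.
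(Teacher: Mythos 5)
The paper offers no proof of this lemma at all: it is quoted from the literature (\cite{kd}; see also \cite{kst}), so there is no in-paper argument to compare against. Your proposal is the standard textbook proof and is correct in substance. The Fubini--Tonelli interchange over the triangle $\{a\leq \tau \leq s\leq t\}$ followed by the Euler Beta integral does yield the semigroup law; for $g\in L^{1}(a,b)$ the interchange is justified by applying Tonelli to $|g|$ and noting that $I_{a^{+}}^{\alpha +\beta }|g|$ is finite almost everywhere, so the identity holds a.e., and it holds pointwise on $(a,b]$ (resp.\ $[a,b]$) when $g\in C_{\gamma }[a,b]$ (resp.\ $C[a,b]$), exactly as you observe. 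The derivation $D_{a^{+}}^{\alpha }I_{a^{+}}^{\alpha }g=\frac{d^{n}}{dt^{n}}I_{a^{+}}^{n-\alpha }I_{a^{+}}^{\alpha }g=\frac{d^{n}}{dt^{n}}I_{a^{+}}^{n}g=g$ is also the standard route, and you correctly flag that the last step requires $I_{a^{+}}^{n}g$ to be differentiable $n$ times in the absolutely continuous sense, which holds because it is an $n$-fold iterated integral of an $L^{1}$ function. One caveat worth making explicit: as printed, the second identity in the lemma reads $D_{a^{+}}^{\alpha }I_{a^{+}}^{\beta }g=g$ with independent $\alpha ,\beta >0$, which is false in general (for $\beta \geq \alpha $ one gets $I_{a^{+}}^{\beta -\alpha }g$, not $g$); the correct statement, and the one your argument actually establishes, is the case $\beta =\alpha $. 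You silently repair this typo, which is the right mathematics, but you should state that you are doing so.
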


\begin{lemma}
\label{Le1}(\cite{fk}) For $t>a,$ we have

\begin{description}
\item[(i)] $I_{a^{+}}^{\alpha }(t-a)^{\beta -1}=\frac{\Gamma (\beta )}{%
\Gamma (\beta +\alpha )}(t-a)^{\beta +\alpha -1},\quad \alpha \geq 0,\beta
>0.$\newline

\item[(ii)] $D_{a^{+}}^{\alpha }(t-a)^{\alpha -1}=0,\quad \alpha \in (0,1).$
\end{description}
\end{lemma}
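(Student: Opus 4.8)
The plan is to prove both identities by direct computation from the definitions \eqref{d1} and \eqref{d2}, reducing each integral to Euler's Beta function. For part (i), I would substitute $g(s)=(s-a)^{\beta-1}$ into \eqref{d1} to obtain
\begin{equation*}
I_{a^{+}}^{\alpha }(t-a)^{\beta -1}=\frac{1}{\Gamma (\alpha )}\int_{a}^{t}(t-s)^{\alpha -1}(s-a)^{\beta -1}\,ds,
\end{equation*}
and then apply the change of variable $s=a+(t-a)\tau$, so that $s-a=(t-a)\tau$ and $t-s=(t-a)(1-\tau)$ with $\tau$ ranging over $[0,1]$. This factors out the power $(t-a)^{\alpha+\beta-1}$ and leaves the integral $\int_0^1 \tau^{\beta-1}(1-\tau)^{\alpha-1}\,d\tau$, which is precisely the Beta function $\mathrm{B}(\beta,\alpha)$. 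Invoking the classical relation $\mathrm{B}(\beta,\alpha)=\Gamma(\beta)\Gamma(\alpha)/\Gamma(\alpha+\beta)$ and cancelling the factor $\Gamma(\alpha)$ then yields the claimed formula.

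Part (ii) I would deduce from part (i) rather than recompute from scratch. Since $\alpha\in(0,1)$, the integer in \eqref{d2} is $n=[\alpha]+1=1$, so that $D_{a^{+}}^{\alpha}=\frac{d}{dt}\,I_{a^{+}}^{1-\alpha}$. Applying part (i) to the fixed power $(t-a)^{\alpha-1}$ with integration order $1-\alpha$ gives
\begin{equation*}
I_{a^{+}}^{1-\alpha}(t-a)^{\alpha-1}=\frac{\Gamma(\alpha)}{\Gamma(1)}(t-a)^{0}=\Gamma(\alpha),
\end{equation*}
a constant in $t$. Differentiating this constant produces $0$, which is exactly the assertion.

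The computation is essentially routine, so the only points that need genuine care are convergence and the boundary case, and that is where I would focus. I would note that the Beta integral converges precisely because the hypotheses $\alpha>0$ and $\beta>0$ keep both exponents $\alpha-1$ and $\beta-1$ strictly greater than $-1$, making the integrand integrable near both endpoints; this is the only place the stated restrictions on $\alpha,\beta$ are used. The degenerate case $\alpha=0$ admitted in part (i) lies outside the integral definition and should be treated separately by reading $I_{a^{+}}^{0}$ as the identity operator, under which $(t-a)^{\beta-1}$ is returned unchanged—consistent with the formula, since then $\Gamma(\beta)/\Gamma(\beta+0)=1$ and the exponent is $\beta-1$. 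Beyond bookkeeping these edge conditions, I do not anticipate any substantive obstacle.
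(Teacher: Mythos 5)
Your computation is correct: the Beta-function reduction for part (i) and the deduction of part (ii) from part (i) via $D_{a^{+}}^{\alpha}=\frac{d}{dt}I_{a^{+}}^{1-\alpha}$ (taking $\beta=\alpha$ so that $I_{a^{+}}^{1-\alpha}(t-a)^{\alpha-1}=\Gamma(\alpha)$, a constant) are both sound, and your remarks on convergence and the $\alpha=0$ case are appropriate. Note, however, that the paper offers no proof of this lemma at all---it is quoted directly from the reference \cite{fk} as a known preliminary---so there is nothing to compare against; your argument is the standard one found in the cited literature.
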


\begin{lemma}
\label{def8.8} (\cite{fk}) Let $\alpha >0$, $\beta >0$ and $\gamma
=\alpha +\beta -\alpha \beta .$ If $g\in C_{1-\gamma }^{\gamma }[a,b]$, then%
\newline
\begin{equation*}
I_{a^{+}}^{\gamma }D_{a^{+}}^{\gamma }g=I_{a^{+}}^{\alpha }D_{a^{+}}^{\alpha
,\beta }g,~D_{a^{+}}^{\gamma }I_{a^{+}}^{\alpha }g=D_{a^{+}}^{\beta
(1-\alpha )}g.
\end{equation*}
\end{lemma}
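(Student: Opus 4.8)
The plan is to reduce both identities to the semigroup law for Riemann--Liouville integrals (Lemma \ref{def8.5}) combined with careful bookkeeping of the exponents, using the rewriting of the Hilfer derivative noted in Remark \ref{rem8.a}. The crucial algebraic observation is that, since $\gamma =\alpha +\beta -\alpha \beta =\alpha +\beta (1-\alpha )$, one has $(1-\beta )(1-\alpha )=1-\gamma $, so that $D_{a^{+}}^{\alpha ,\beta }=I_{a^{+}}^{\beta (1-\alpha )}D_{a^{+}}^{\gamma }$, where $D_{a^{+}}^{\gamma }g=\frac{d}{dt}I_{a^{+}}^{1-\gamma }g$. The hypothesis $g\in C_{1-\gamma }^{\gamma }[a,b]$ guarantees that $D_{a^{+}}^{\gamma }g\in C_{1-\gamma }[a,b]$, so that every composition written below is well defined and the semigroup law is applicable to functions in this weighted space.

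For the first identity, I would start from $D_{a^{+}}^{\alpha ,\beta }g=I_{a^{+}}^{\beta (1-\alpha )}D_{a^{+}}^{\gamma }g$ and apply $I_{a^{+}}^{\alpha }$ on the left:
\[
I_{a^{+}}^{\alpha }D_{a^{+}}^{\alpha ,\beta }g=I_{a^{+}}^{\alpha }I_{a^{+}}^{\beta (1-\alpha )}D_{a^{+}}^{\gamma }g=I_{a^{+}}^{\alpha +\beta (1-\alpha )}D_{a^{+}}^{\gamma }g=I_{a^{+}}^{\gamma }D_{a^{+}}^{\gamma }g,
\]
where the middle equality is the semigroup property of Lemma \ref{def8.5} and the last uses $\alpha +\beta (1-\alpha )=\gamma $. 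This is exactly the asserted equality.

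For the second identity, I would expand $D_{a^{+}}^{\gamma }=\frac{d}{dt}I_{a^{+}}^{1-\gamma }$ and again invoke the semigroup law:
\[
D_{a^{+}}^{\gamma }I_{a^{+}}^{\alpha }g=\frac{d}{dt}I_{a^{+}}^{1-\gamma }I_{a^{+}}^{\alpha }g=\frac{d}{dt}I_{a^{+}}^{1-\gamma +\alpha }g.
\]
The exponent simplifies via $1-\gamma +\alpha =1-\beta (1-\alpha )$, and since $0\leq \beta (1-\alpha )<1$ the Riemann--Liouville derivative of that order is precisely $D_{a^{+}}^{\beta (1-\alpha )}=\frac{d}{dt}I_{a^{+}}^{1-\beta (1-\alpha )}$; hence $D_{a^{+}}^{\gamma }I_{a^{+}}^{\alpha }g=D_{a^{+}}^{\beta (1-\alpha )}g$.

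The argument is essentially mechanical, so I do not anticipate a serious obstacle; the only points demanding care are (i) verifying the two exponent identities $\alpha +\beta (1-\alpha )=\gamma $ and $1-\gamma +\alpha =1-\beta (1-\alpha )$, and (ii) ensuring that the composition $I_{a^{+}}^{1-\gamma }I_{a^{+}}^{\alpha }=I_{a^{+}}^{1-\gamma +\alpha }$ and the subsequent differentiation are legitimate under the regularity $g\in C_{1-\gamma }^{\gamma }[a,b]$, which is exactly what Lemma \ref{def8.5} supplies for functions in the weighted space.
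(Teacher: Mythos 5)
The paper does not prove this lemma at all: it is imported verbatim from the cited source \cite{fk} as a preliminary, so there is no in-paper argument to compare against. Your self-contained derivation is correct and is essentially the standard one. Both identities reduce, as you say, to the factorization $D_{a^{+}}^{\alpha ,\beta }=I_{a^{+}}^{\beta (1-\alpha )}D_{a^{+}}^{\gamma }$ with $D_{a^{+}}^{\gamma }=\tfrac{d}{dt}I_{a^{+}}^{1-\gamma }$ (Remark \ref{rem8.a}), the exponent identities $(1-\beta )(1-\alpha )=1-\gamma $ and $1-\gamma +\alpha =1-\beta (1-\alpha )$, and the semigroup law of Lemma \ref{def8.5}; and you correctly identify the one nontrivial regularity point, namely that the semigroup law in the first identity is applied to $D_{a^{+}}^{\gamma }g$, which lies in $C_{1-\gamma }[a,b]$ precisely because $g\in C_{1-\gamma }^{\gamma }[a,b]$ (for the second identity only $g\in C_{1-\gamma }[a,b]$ is actually needed). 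The only cosmetic remark is that the lemma as printed says $\alpha >0$, $\beta >0$ with no upper bounds, which is evidently a typo for the range $0<\alpha <1$, $0\leq \beta \leq 1$ used everywhere else; your argument tacitly (and reasonably) works in that intended range, where $0\leq \beta (1-\alpha )<1$ so that the Riemann--Liouville derivative of order $\beta (1-\alpha )$ is indeed $\tfrac{d}{dt}I_{a^{+}}^{1-\beta (1-\alpha )}$, and the degenerate case $\beta (1-\alpha )=0$ is covered by reading $I_{a^{+}}^{0}$ as the identity.
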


\begin{lemma}
\label{Le2} (\cite{fk}) Let $0<\alpha <1,$ $0\leq \beta \leq 1$ and $g\in
C_{1-\gamma }[a,b].$ Then%
\begin{equation*}
I_{a^{+}}^{\alpha }D_{a^{+}}^{\alpha ,\beta }g(t)=g(t)-\frac{%
(t-a)^{\alpha +\beta (1-\alpha )-1}}{\Gamma (\alpha +\beta (1-\alpha ))}%
I_{a^{+}}^{(1-\beta )(1-\alpha )}g(a),\quad \text{for all}\quad t\in (a,b],
\end{equation*}%
Moreover, if $\ \gamma =\alpha +\beta -\alpha \beta ,$ $g\in C_{1-\gamma
}[a,b]$ and $I_{a^{+}}^{1-\gamma }g\in C_{1-\gamma }^{1}[a,b],$ then
\begin{equation*}
I_{a^{+}}^{\gamma }D_{a^{+}}^{\gamma }g(t)=g(t)-\frac{%
(t-a)^{\gamma -1}}{\Gamma (\gamma)}I_{a^{+}}^{1-\gamma }g(a),\quad
\text{for all}\quad t\in (a,b].
\end{equation*}
\end{lemma}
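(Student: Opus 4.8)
The plan is to recognize that the two displayed identities are really the single classical Riemann--Liouville composition formula written in two notations, and to reduce the Hilfer statement to it. First I would record the algebraic identities forced by $\gamma=\alpha+\beta(1-\alpha)$, namely $\alpha+\beta(1-\alpha)-1=\gamma-1$ and $(1-\beta)(1-\alpha)=1-\gamma$, so that $I_{a^{+}}^{(1-\beta)(1-\alpha)}g=I_{a^{+}}^{1-\gamma}g$ and the two boundary terms coincide. Then, using Definition \ref{7} together with the semigroup law for fractional integrals from Lemma \ref{def8.5},
\begin{equation*}
I_{a^{+}}^{\alpha}D_{a^{+}}^{\alpha,\beta}g
=I_{a^{+}}^{\alpha}I_{a^{+}}^{\beta(1-\alpha)}D\,I_{a^{+}}^{(1-\beta)(1-\alpha)}g
=I_{a^{+}}^{\alpha+\beta(1-\alpha)}D\,I_{a^{+}}^{1-\gamma}g
=I_{a^{+}}^{\gamma}D_{a^{+}}^{\gamma}g,
\end{equation*}
where $D_{a^{+}}^{\gamma}g=D\,I_{a^{+}}^{1-\gamma}g$. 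Hence the first identity follows once the second is proved, and the whole lemma collapses to establishing $I_{a^{+}}^{\gamma}D_{a^{+}}^{\gamma}g(t)=g(t)-\frac{(t-a)^{\gamma-1}}{\Gamma(\gamma)}I_{a^{+}}^{1-\gamma}g(a)$.

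To prove this core identity I would set $\psi:=I_{a^{+}}^{1-\gamma}g$, so that $D_{a^{+}}^{\gamma}g=\psi'$; the hypothesis $I_{a^{+}}^{1-\gamma}g\in C_{1-\gamma}^{1}[a,b]$ guarantees that $\psi$ is differentiable and that the value $\psi(a)=\lim_{t\to a^{+}}I_{a^{+}}^{1-\gamma}g(t)$ is well defined. The engine of the argument is the commutation relation
\begin{equation*}
D\,I_{a^{+}}^{\gamma}\psi(t)=I_{a^{+}}^{\gamma}\psi'(t)+\frac{(t-a)^{\gamma-1}}{\Gamma(\gamma)}\psi(a).
\end{equation*}
Granting this, I rearrange to isolate $I_{a^{+}}^{\gamma}\psi'=I_{a^{+}}^{\gamma}D_{a^{+}}^{\gamma}g$, and then invoke the semigroup law once more: $I_{a^{+}}^{\gamma}\psi=I_{a^{+}}^{\gamma}I_{a^{+}}^{1-\gamma}g=I_{a^{+}}^{1}g$, so that $D\,I_{a^{+}}^{\gamma}\psi=\frac{d}{dt}\int_{a}^{t}g(s)\,ds=g(t)$ by the fundamental theorem of calculus. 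Substituting gives precisely $I_{a^{+}}^{\gamma}D_{a^{+}}^{\gamma}g(t)=g(t)-\frac{(t-a)^{\gamma-1}}{\Gamma(\gamma)}\psi(a)$, completing the reduction.

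The main obstacle is the rigorous justification of the commutation relation, i.e. handling a fractional integral of a derivative whose kernel $(t-s)^{\gamma-1}$ is singular at $s=t$. A naive integration by parts produces a divergent boundary term at $s=t$, so I would instead substitute $r=t-s$ to write $I_{a^{+}}^{\gamma}\psi(t)=\frac{1}{\Gamma(\gamma)}\int_{0}^{t-a}r^{\gamma-1}\psi(t-r)\,dr$ and differentiate in $t$ by the Leibniz rule: the moving upper limit contributes exactly $\frac{(t-a)^{\gamma-1}}{\Gamma(\gamma)}\psi(a)$, while the interior term reverts to $I_{a^{+}}^{\gamma}\psi'(t)$ after undoing the substitution. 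The delicate point is legitimizing this differentiation under the integral sign, since $r^{\gamma-1}$ is singular (though integrable) at $r=0$; this is where the weighted-space hypotheses earn their keep, as $I_{a^{+}}^{1-\gamma}g\in C_{1-\gamma}^{1}[a,b]$ supplies the $C^{1}$ regularity of $\psi$ and the existence of $\psi(a)$ needed to pass to the limit. I would accordingly devote most of the care to verifying these integrability and regularity conditions, and note that for the first identity the weaker assumption $g\in C_{1-\gamma}[a,b]$ suffices once one assumes $D_{a^{+}}^{\alpha,\beta}g$ exists.
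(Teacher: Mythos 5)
The paper offers no proof of this lemma at all: it is quoted verbatim from Furati--Kassim \cite{fk} as a known tool, so there is no in-paper argument to compare against. Your proof is essentially the standard one from that reference and is correct in outline. The reduction is right: since $(1-\beta)(1-\alpha)=1-\gamma$ and $\alpha+\beta(1-\alpha)=\gamma$, the semigroup law gives $I_{a^{+}}^{\alpha}D_{a^{+}}^{\alpha,\beta}g=I_{a^{+}}^{\alpha}I_{a^{+}}^{\beta(1-\alpha)}DI_{a^{+}}^{1-\gamma}g=I_{a^{+}}^{\gamma}D_{a^{+}}^{\gamma}g$ (this is exactly Lemma \ref{def8.8}, which you could have cited to shorten the step), and the two displayed identities collapse to one. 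Your commutation relation and the identification $DI_{a^{+}}^{\gamma}\psi=DI_{a^{+}}^{1}g=g$ are both correct. Two small points of care. First, the Leibniz differentiation under the integral sign that you flag as delicate can be sidestepped entirely: apply $I_{a^{+}}^{1}$ to $I_{a^{+}}^{\gamma}\psi'$, commute to get $I_{a^{+}}^{\gamma}I_{a^{+}}^{1}\psi'=I_{a^{+}}^{\gamma}\bigl(\psi-\psi(a)\bigr)=I_{a^{+}}^{\gamma}\psi-\psi(a)\tfrac{(t-a)^{\gamma}}{\Gamma(\gamma+1)}$, and then differentiate once using the fundamental theorem of calculus and Lemma \ref{Le1}(i); this needs only continuity of $I_{a^{+}}^{\gamma}\psi'$ rather than a justification of differentiating a singular-kernel integral. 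Second, your reduction proves the first identity under the hypotheses of the second part (differentiability of $I_{a^{+}}^{1-\gamma}g$ and existence of its limit at $a$, the latter supplied by $I_{a^{+}}^{1-\gamma}g\in C_{1-\gamma}^{1}[a,b]\subset C[a,b]$), whereas the first part as stated assumes only $g\in C_{1-\gamma}[a,b]$; you correctly note that the extra regularity is implicitly required anyway for $D_{a^{+}}^{\alpha,\beta}g$ to exist, which matches how the cited source actually states the result.
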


\begin{lemma}
\label{def8.7} (\cite{fk}) If $0\leq \gamma <1$ and $g\in C_{\gamma
}[a,b]$, then
\begin{equation*}
(I_{a^{+}}^{\alpha }g)(a)=\lim_{t\rightarrow a^{+}}I_{a^{+}}^{\alpha
}g(t)=0,~0\leq \gamma <\alpha .
\end{equation*}
\end{lemma}

\begin{lemma}
\label{le}(\cite{fk}) Let $\gamma =\alpha +\beta -\alpha \beta $ where $%
0<\alpha <1$ and $0\leq \beta \leq 1.$ Let $f:(a,b]\times \R\rightarrow \R$ be
a function such that $f(t,z)\in C_{1-\gamma }[a,b]$ for any $z\in
C_{1-\gamma }[a,b].$ If $z\in C_{1-\gamma }^{\gamma }[a,b],$ then $z$
satisfies IVP \eqref{f1}-\eqref{f2} if and only if $z$ satisfies the
Volterra integral equation
\begin{equation}
z(t)=\frac{y_a}{\Gamma (\gamma)}(t-a)^{\gamma -1}+\frac{%
1}{\Gamma (\alpha )}\int_{a}^{t}(t-s)^{\alpha -1}f(s,z(s))ds,\quad t>a.
\label{s3}
\end{equation}
\end{lemma}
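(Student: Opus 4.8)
The plan is to prove the two implications by passing between the Hilfer derivative $D_{a^{+}}^{\alpha,\beta}$ and the Riemann--Liouville integral $I_{a^{+}}^{\alpha}$, exploiting the factorization $D_{a^{+}}^{\alpha,\beta}=I_{a^{+}}^{\beta(1-\alpha)}D_{a^{+}}^{\gamma}$ of Remark \ref{rem8.a}(i) together with the arithmetic identity $1-\gamma=(1-\beta)(1-\alpha)$, which matches the exponents appearing in Lemma \ref{Le2}.

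For the necessity direction, I would assume $z\in C_{1-\gamma}^{\gamma}[a,b]$ solves \eqref{f1}--\eqref{f2} and apply $I_{a^{+}}^{\alpha}$ to both sides of \eqref{f1}. By the first identity of Lemma \ref{Le2}, the left-hand side equals $z(t)-\frac{(t-a)^{\gamma-1}}{\Gamma(\gamma)}I_{a^{+}}^{1-\gamma}z(a)$; substituting the initial condition $I_{a^{+}}^{1-\gamma}z(a^{+})=y_a$ and writing the right-hand side as $I_{a^{+}}^{\alpha}f(t,z(t))$ yields \eqref{s3} after rearrangement.

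For the sufficiency direction, I would assume $z\in C_{1-\gamma}^{\gamma}[a,b]$ satisfies \eqref{s3} and verify the two defining conditions of the IVP separately. To recover the initial condition, apply $I_{a^{+}}^{1-\gamma}$ to \eqref{s3}: Lemma \ref{Le1}(i) collapses the first term to exactly $y_a$, while the semigroup law of Lemma \ref{def8.5} turns the second into $I_{a^{+}}^{\alpha+1-\gamma}f(t,z(t))$, which tends to $0$ as $t\to a^{+}$ by Lemma \ref{def8.7}, since $f(\cdot,z(\cdot))\in C_{1-\gamma}[a,b]$ and $1-\gamma<\alpha+1-\gamma$. To recover the differential equation, apply $D_{a^{+}}^{\gamma}$ to \eqref{s3}: Lemma \ref{Le1}(ii) annihilates the power term and Lemma \ref{def8.8} gives $D_{a^{+}}^{\gamma}I_{a^{+}}^{\alpha}f=D_{a^{+}}^{\beta(1-\alpha)}f$, so that $D_{a^{+}}^{\gamma}z=D_{a^{+}}^{\beta(1-\alpha)}f(t,z(t))$; applying $I_{a^{+}}^{\beta(1-\alpha)}$ and using the factorization identifies the left side with $D_{a^{+}}^{\alpha,\beta}z$.

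The main obstacle I anticipate is the final step of the sufficiency argument, where applying $I_{a^{+}}^{\beta(1-\alpha)}$ to $D_{a^{+}}^{\beta(1-\alpha)}f$ does not immediately return $f$: unlike the identity $D^{\mu}I^{\mu}=\mathrm{id}$ supplied by Lemma \ref{def8.5}, the reverse composition $I_{a^{+}}^{\mu}D_{a^{+}}^{\mu}f$ with $\mu=\beta(1-\alpha)$ carries a boundary term $\frac{(t-a)^{\mu-1}}{\Gamma(\mu)}I_{a^{+}}^{1-\mu}f(a)$. I would dispose of it by noting $\mu=\gamma-\alpha<\gamma$, so that $f\in C_{1-\gamma}[a,b]$ and Lemma \ref{def8.7} force $I_{a^{+}}^{1-\mu}f(a)=0$; the degenerate case $\beta=0$, where $\mu=0$ and the composition is trivial, should be checked directly. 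Throughout, care is needed to confirm that each intermediate function lies in the weighted space required to invoke the cited lemmas, which is exactly what the hypothesis $z\in C_{1-\gamma}^{\gamma}[a,b]$ and the assumption $f(t,z)\in C_{1-\gamma}[a,b]$ guarantee.
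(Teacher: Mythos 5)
Your proposal is correct and follows exactly the standard argument: the paper itself gives no proof of Lemma \ref{le} (it is quoted from \cite{fk}), but the machinery you use — Lemma \ref{Le2} for necessity, then $I_{a^{+}}^{1-\gamma}$ with Lemmas \ref{Le1}, \ref{def8.5}, \ref{def8.7} and $D_{a^{+}}^{\gamma}$ with Lemmas \ref{Le1}, \ref{def8.8}, Remark \ref{rem8.a}(i) for sufficiency — is precisely what the paper deploys in its proof of the BVP analogue, Lemma \ref{lee(1)}. Your handling of the boundary term $I_{a^{+}}^{1-\beta(1-\alpha)}f(a)=0$ via Lemma \ref{def8.7} matches the paper's treatment as well.
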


\section{Existence of solution}

In this section we prove the existence of solution to BVP %
\eqref{e8.1}-\eqref{e8.2} in $C_{1-\gamma }^{\alpha ,\beta }[a,b].$

\begin{lemma}
\label{lee(1)} Let $0<\alpha <1$, $0\leq \beta \leq 1$ where $%
\gamma =\alpha +\beta -\alpha \beta $, and $f:(a,b]\times \mathbb{R}%
\rightarrow \mathbb{R}$ be a function such that $f(x,z)\in C_{1-\gamma
}[a,b] $ for any $z\in C_{1-\gamma }[a,b].$ If $z\in C_{1-\gamma }^{\gamma
}[a,b],$ then $z$ satisfies BVP \eqref{e8.1}-\eqref{e8.2} if and only if $z$
satisfies the integral equation$-$%
\begin{eqnarray}
z(t) &=&\frac{(t-a)^{\gamma -1}}{\Gamma (\gamma)}\frac{%
e_{k}}{d\left( 1+\frac{c}{d}\right) }-\frac{1}{\left( 1+\frac{c}{d}\right) }%
\frac{(t-a)^{\gamma -1}}{\Gamma (\gamma)}  \notag \\
&&\times \frac{1}{\Gamma (1-\gamma +\alpha )}\int_{a}^{b}(b-s)^{\alpha-\gamma}f(s,z(s))ds  \notag \\
&&+\frac{1}{\Gamma (\alpha )}\int_{a}^{t}(t-s)^{\alpha -1}f(s,z(s))ds.
\label{ee3}
\end{eqnarray}
\end{lemma}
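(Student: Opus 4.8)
The plan is to handle the two implications together by offloading the ``differential-equation part'' onto the already-established equivalence in Lemma \ref{le}, so that the only genuinely new work concerns the boundary condition \eqref{e8.2}. I read \eqref{e8.2} as $c\,I_{a^{+}}^{1-\gamma}z(a^{+})+d\,I_{a^{+}}^{1-\gamma}z(b^{-})=e_{k}$.

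For the forward implication I would assume $z\in C_{1-\gamma}^{\gamma}[a,b]$ satisfies the BVP and introduce the a priori unknown constant $z_{0}:=I_{a^{+}}^{1-\gamma}z(a^{+})$. Since $z$ then satisfies the Hilfer equation \eqref{e8.1} together with the initial value $I_{a^{+}}^{1-\gamma}z(a^{+})=z_{0}$, Lemma \ref{le} applies verbatim (with $y_{a}$ replaced by $z_{0}$) and yields the Volterra representation
\[
z(t)=\frac{z_{0}}{\Gamma(\gamma)}(t-a)^{\gamma-1}+\frac{1}{\Gamma(\alpha)}\int_{a}^{t}(t-s)^{\alpha-1}f(s,z(s))\,ds.
\]
It then remains to pin down $z_{0}$ from \eqref{e8.2}. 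The core computation is to apply $I_{a^{+}}^{1-\gamma}$ to this representation: using Lemma \ref{Le1}(i) on the power term, which gives $I_{a^{+}}^{1-\gamma}(t-a)^{\gamma-1}=\Gamma(\gamma)$, and the semigroup law of Lemma \ref{def8.5} on the integral term, I obtain
\[
I_{a^{+}}^{1-\gamma}z(t)=z_{0}+\frac{1}{\Gamma(1-\gamma+\alpha)}\int_{a}^{t}(t-s)^{\alpha-\gamma}f(s,z(s))\,ds.
\]
Evaluating at $t=a^{+}$ the remainder integral vanishes by Lemma \ref{def8.7} (valid since $1-\gamma<1-\gamma+\alpha$, i.e.\ $\alpha>0$), so $I_{a^{+}}^{1-\gamma}z(a^{+})=z_{0}$, consistent with the definition; evaluating at $t=b^{-}$ leaves the integral over $[a,b]$. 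Substituting both boundary values into \eqref{e8.2} produces the single scalar equation $c z_{0}+d\big[z_{0}+\tfrac{1}{\Gamma(1-\gamma+\alpha)}\int_{a}^{b}(b-s)^{\alpha-\gamma}f(s,z(s))\,ds\big]=e_{k}$, which I solve for $z_{0}$ after writing $c+d=d(1+c/d)$; inserting the resulting $z_{0}$ back into the Volterra representation yields exactly \eqref{ee3}.

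For the converse I would start from \eqref{ee3}, observe that it has the form $z(t)=\tfrac{z_{0}}{\Gamma(\gamma)}(t-a)^{\gamma-1}+I_{a^{+}}^{\alpha}f(t)$ with $z_{0}$ the constant bracketed coefficient, and invoke Lemma \ref{le} in the reverse direction to conclude that $z$ solves \eqref{e8.1} with $I_{a^{+}}^{1-\gamma}z(a^{+})=z_{0}$. Reversing the boundary computation above, that is, recomputing $I_{a^{+}}^{1-\gamma}z(a^{+})$ and $I_{a^{+}}^{1-\gamma}z(b^{-})$ and checking that $c z_{0}+d[\cdots]=e_{k}$, recovers \eqref{e8.2}.

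The step requiring the most care is the evaluation of $I_{a^{+}}^{1-\gamma}z$ at the two endpoints together with the ensuing algebra. I must justify that $I_{a^{+}}^{1-\gamma}$ may be carried through the integral representation and that the remainder vanishes at $a^{+}$ within the weighted space $C_{1-\gamma}[a,b]$; this is precisely where Lemma \ref{def8.7} and the strict inequality $\alpha>0$ are essential. I must also record the standing nondegeneracy hypothesis $c+d\neq0$ (equivalently $d\neq0$ and $1+c/d\neq0$), without which $z_{0}$ is not uniquely determined and \eqref{ee3} is meaningless. Everything else is routine bookkeeping via the semigroup property and the Gamma-function identities of Lemma \ref{Le1}.
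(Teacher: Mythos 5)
Your proposal is correct, and the forward implication is essentially identical to the paper's: both introduce the unknown constant $z_{0}=I_{a^{+}}^{1-\gamma}z(a^{+})$, invoke Lemma \ref{le} to get the Volterra representation, apply $I_{a^{+}}^{1-\gamma}$, evaluate at $t\to a^{+}$ (where Lemma \ref{def8.7} kills the remainder) and at $t\to b^{-}$, and solve the resulting scalar equation $cz_{0}+dI_{a^{+}}^{1-\gamma}z(b^{-})=e_{k}$ for $z_{0}$ before substituting back. Where you genuinely diverge is the converse. The paper verifies the boundary condition by the same endpoint computation you describe, but then re-establishes the differential equation by hand: it applies $D_{a^{+}}^{\gamma}$ to \eqref{ee3}, uses Lemmas \ref{Le1} and \ref{def8.8} to reduce to $D_{a^{+}}^{\gamma}z=D_{a^{+}}^{\beta(1-\alpha)}f$, then applies $I_{a^{+}}^{\beta(1-\alpha)}$ and Lemmas \ref{Le2} and \ref{def8.7} to conclude $D_{a^{+}}^{\alpha,\beta}z=f$. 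You instead observe that \eqref{ee3} is literally of the form \eqref{s3} with $y_{a}$ replaced by the constant bracketed coefficient, and cite the reverse implication of Lemma \ref{le} to get \eqref{e8.1} for free; this is legitimate since Lemma \ref{le} is stated as an equivalence under the same regularity hypothesis $z\in C_{1-\gamma}^{\gamma}[a,b]$, and it is the more economical route (the paper is, in effect, re-proving half of Lemma \ref{le} inline). Your explicit flagging of the nondegeneracy condition $c+d\neq 0$ (equivalently $d\neq 0$ and $1+c/d\neq 0$) is a point the paper silently assumes and should have stated; it is needed for \eqref{e4} and hence for \eqref{ee3} to make sense.
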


Proof: \ In view of Lemma \ref{le}, the solution of \eqref{e8.1} can be
written as%
\begin{equation}
z(t)=\frac{I_{a^{+}}^{1-\gamma }z(a^{+})}{\Gamma (\gamma)}%
(t-a)^{\gamma -1}+\frac{1}{\Gamma (\alpha )}\int_{a}^{t}(t-s)^{\alpha
-1}f(s,z(s))ds,\quad t>a.  \label{e8.3}
\end{equation}%
\

Applying $I_{a^{+}}^{1-\gamma }$ on both sides of \eqref{e8.3} and taking
the limit $t\rightarrow b^{-}$, we obtain
\begin{equation}
I_{a^{+}}^{1-\gamma }z(b^{-})=I_{a^{+}}^{1-\gamma }z(a^{+})+%
\frac{1}{\Gamma (1-\gamma +\alpha )}\int_{a}^{b}(b-s)^{\alpha-\gamma}f(s,z(s))ds.  \label{e8.4}
\end{equation}%
Also again by applying $I_{a^{+}}^{1-\gamma }$ on both sides of \eqref{e8.3}%
, we have
\begin{eqnarray*}
I_{a^{+}}^{1-\gamma }z(t) &=&I_{a^{+}}^{1-\gamma }z(a^{+})+%
\frac{1}{\Gamma (1-\gamma +\alpha )}\int_{a}^{t}(t-s)^{\alpha-\gamma}f(s,z(s))ds \\
&=&I_{a^{+}}^{1-\gamma }z(a^{+})+I_{a^{+}}^{1-\beta (1-\alpha
)}f(t,z(t)).
\end{eqnarray*}%
Taking the limit $t\rightarrow a^{+}$\ and using Lemma \ref{def8.7} with $%
1-\gamma <1-\beta (1-\alpha ),$\ we obtain
\begin{equation}
I_{a^{+}}^{1-\gamma }z(a^{+})=I_{a^{+}}^{1-\gamma }z(a^{+}),
\label{e8.4a}
\end{equation}%
hence%
\begin{equation}
I_{a^{+}}^{1-\gamma }z(b^{-})=I_{a^{+}}^{1-\gamma }z(a^{+})+\frac{1}{\Gamma
(1-\gamma +\alpha )}\int_{a}^{b}(b-s)^{-\gamma +\alpha}f(s,z(s))ds.
\label{e8.4b}
\end{equation}

From the boundary condition (\ref{e8.2}), we have \
\begin{equation}
I_{a^{+}}^{1-\gamma }z(b^{-})=\frac{e_{k}}{d}-\frac{c}{d}I_{a^{+}}^{1-\gamma
}z(a^{+}).  \label{e3}
\end{equation}%
Comparing the equations (\ref{e8.4b}) and (\ref{e3}), and using (\ref{e8.4a}%
), we get%
\begin{equation}
I_{a^{+}}^{1-\gamma }z(a^{+})=\frac{1}{\left( 1+\frac{c}{d}\right) }\left(
\frac{e_{k}}{d}-\frac{1}{\Gamma (1-\gamma +\alpha )}\int_{a}^{b}(b-s)^{%
\alpha-\gamma}f(s,z(s))ds\right) .  \label{e4}
\end{equation}

Submitting \eqref{e8.3} into \eqref{e4}, we obtain%
\begin{eqnarray}
z(t) &=&\frac{(t-a)^{\gamma -1}}{\Gamma (\gamma)}\frac{%
e_{k}}{d\left( 1+\frac{c}{d}\right) }-\frac{1}{\left( 1+\frac{c}{d}\right) }%
\frac{(t-a)^{\gamma -1}}{\Gamma (\gamma)}  \notag \\
&&\times \frac{1}{\Gamma (1-\gamma +\alpha )}\int_{a}^{b}(b-s)^{\alpha-\gamma}f(s,z(s))ds  \notag \\
&&+\frac{1}{\Gamma (\alpha )}\int_{a}^{t}(t-s)^{\alpha -1}f(s,z(s))ds.
\label{E5}
\end{eqnarray}%
\

Conversely, applying $I_{a^{+}}^{1-\gamma }$ on both sides of \eqref{ee3},
using Lemmas \ref{Le1} and \ref{def8.5}, with some simple calculations, we
get
\begin{eqnarray*}
&&I_{a^{+}}^{1-\gamma }cz(a^{+})+I_{a^{+}}^{1-\gamma }dz(b^{-}) \\
&=&\frac{c}{\left( 1+\frac{c}{d}\right) }\left( \frac{e_{k}}{d}-\frac{1}{%
\Gamma (1-\gamma +\alpha )}\int_{a}^{b}(b-s)^{\alpha-\gamma}f(s,z(s))ds\right) \\
&&+\frac{d}{\left( 1+\frac{c}{d}\right) }\left( \frac{e_{k}}{d}-\frac{1}{%
\Gamma (1-\gamma +\alpha )}\int_{a}^{b}(b-s)^{\alpha-\gamma}f(s,z(s))ds\right) \\
&&+\frac{d}{\Gamma (1-\gamma +\alpha )}\int_{a}^{b}(b-s)^{\alpha-\gamma}f(s,z(s))ds \\
&=&\frac{ce_{k}}{\left( 1+\frac{c}{d}\right) d}+\frac{de_{k}}{\left( 1+\frac{%
c}{d}\right) d}-\left( \frac{c}{\left( 1+\frac{c}{d}\right) }+\frac{d}{%
\left( 1+\frac{c}{d}\right) }-d\right) \\
&&\frac{1}{\Gamma (1-\gamma +\alpha )}\int_{a}^{b}(b-s)^{\alpha-\gamma}f(s,z(s))ds \\
&=&e_{k}.
\end{eqnarray*}%
\ Which shows that the boundary condition (\ref{e8.2}) is satisfied. \newline

Next, applying $D_{a^{+}}^{\gamma }$ on both sides of \eqref{ee3} and using
Lemmas \ref{Le1} and \ref{def8.8}, we have
\begin{equation}
D_{a^{+}}^{\gamma }z(t)=D_{a^{+}}^{\beta (1-\alpha )}f\big(t,z(t)\big).
\label{e8.9}
\end{equation}

Since $z\in C_{1-\gamma }^{\gamma }[a,b]$ and by definition of $C_{1-\gamma
}^{\gamma }[a,b]$, we have $D_{a^{+}}^{\gamma }z\in C_{n-\gamma }[a,b]$,
therefore, $D_{a^{+}}^{\beta (1-\alpha )}f=DI_{a^{+}}^{1-\beta (1-\alpha
)}f\in C_{1-\gamma }[a,b].$ For $f\in C_{1-\gamma }[a,b]$, it is clear that $%
I_{a^{+}}^{1-\beta (1-\alpha )}f\in C_{1-\gamma }[a,b]$. Hence $f$ and $%
I_{a^{+}}^{1-\beta (1-\alpha )}f$ satisfy the hypothesis of Lemma \ref{Le2}.%
\newline
Now, applying $I_{a^{+}}^{\beta (1-\alpha )}$ on both sides of \eqref{e8.9},
we have%
\begin{equation*}
{\large I_{a^{+}}^{\beta (1-\alpha )}}D_{a^{+}}^{\gamma }z(t)={\large %
I_{a^{+}}^{\beta (1-\alpha )}}D_{a^{+}}^{\beta (1-\alpha )}f\big(t,z(t)\big).
\end{equation*}%
Using Remark~\ref{rem8.a} (i), Eq.(\ref{e8.9}) and Lemma \ref{Le2}, we get%
\begin{equation*}
I_{a^{+}}^{\gamma }D_{a^{+}}^{\gamma }z(t)=f\big(t,z(t)\big)-%
\frac{I_{a^{+}}^{1-\beta (1-\alpha )}f\big(a,z(a)\big)}{\Gamma (\beta
(1-\alpha ))}(t-a)^{\beta (1-\alpha )-1},\quad \text{for all}\quad t\in
(a,b].
\end{equation*}%
\ By Lemma \ref{def8.7}, we have $I_{a^{+}}^{1-\beta (n-\alpha )}f\big(a,z(a)%
\big)=0$. Therefore, we have $D_{a^{+}}^{\alpha ,\beta }z(t)=f\big(t,z(t)%
\big)$. This completes the proof.

Let us introduce the hypotheses needed to prove the existence of solutions
for the problem at hand.

\begin{itemize}
\item[ (H1)] $f:(a,b]\times
%TCIMACRO{\U{211d} }%
%BeginExpansion
\mathbb{R}
%EndExpansion
\rightarrow
%TCIMACRO{\U{211d} }%
%BeginExpansion
\mathbb{R}
%EndExpansion
$ is a function such that $f(\cdot ,z(\cdot ))\in C_{1-\gamma }^{\beta
(1-\alpha )}[a,b]$ for any $z\in C_{1-\gamma }[a,b]$ and there exist two
constants $N,\zeta >0$ such that%
\begin{equation*}
\left\vert f\big(t,z\big)\right\vert \leq N\big(1+\zeta \left\Vert
z\right\Vert _{C_{1-\gamma }}\big).
\end{equation*}

\item[ (H2)] The inequality
\begin{equation}
\mathcal{G}:=\frac{\Gamma (\gamma)}{\Gamma (\alpha +1)}\left[
(b-a)^{\alpha }+(b-a)^{\alpha +1-\gamma }\right] N\zeta <1  \label{rr1}
\end{equation}%
holds.
\end{itemize}

Now, we are ready to present the existence result for the BVP %
\eqref{e8.1}-\eqref{e8.2}, which is based on Schauder's fixed point theorem (see \cite{gd}).

\begin{theorem}
\label{th8.1} Assume that the hypotheses (H1) and (H2) are satisfied. Then
Hilfer boundary value problem \eqref{e8.1}-\eqref{e8.2} has at least one
solution in $C_{1-\gamma }^{\gamma }[a,b]\subset C_{1-\gamma }^{\alpha
,\beta }[a,b]$.
\end{theorem}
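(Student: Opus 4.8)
The plan is to recast \eqref{ee3} as a fixed-point equation and apply Schauder's theorem. I would define the operator $T:C_{1-\gamma}[a,b]\to C_{1-\gamma}[a,b]$ by letting $(Tz)(t)$ be the right-hand side of \eqref{ee3}; by Lemma \ref{lee(1)}, any fixed point of $T$ that lies in $C_{1-\gamma}^{\gamma}[a,b]$ is exactly a solution of the BVP \eqref{e8.1}--\eqref{e8.2}. Since $C_{1-\gamma}[a,b]$ is a Banach space under $\|g\|_{C_{1-\gamma}}=\sup_{t\in[a,b]}|(t-a)^{1-\gamma}g(t)|$ and is isometrically isomorphic to $C[a,b]$ through the weight map $g\mapsto (t-a)^{1-\gamma}g(t)$, Schauder's theorem applies once I exhibit a closed, bounded, convex set $B_{\eta}=\{z:\|z\|_{C_{1-\gamma}}\le \eta\}$ with $T(B_{\eta})\subseteq B_{\eta}$, and show that $T$ is continuous with $T(B_{\eta})$ relatively compact.

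First I would verify the self-map property. Multiplying \eqref{ee3} by $(t-a)^{1-\gamma}$ turns the first two summands into constants in $t$, while the third contributes $(t-a)^{1-\gamma}I_{a^{+}}^{\alpha}f(\cdot,z(\cdot))(t)$. Using the growth bound of (H1), $|f(s,z(s))|\le N(1+\zeta\|z\|_{C_{1-\gamma}})$, together with Lemma \ref{Le1}(i) and the elementary integral $\int_a^b(b-s)^{\alpha-\gamma}ds$, I obtain an estimate of the form $\|Tz\|_{C_{1-\gamma}}\le C_0+\Lambda N\big(1+\zeta\|z\|_{C_{1-\gamma}}\big)$, where $C_0$ collects the $e_k$-contribution and the coefficient $\Lambda N\zeta$ is precisely the quantity $\mathcal{G}$ of (H2) (after using $\alpha+1-\gamma=1-\beta(1-\alpha)$ and gathering the $\Gamma$-factors). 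For $z\in B_{\eta}$ this reads $\|Tz\|_{C_{1-\gamma}}\le C_0+\Lambda N+\mathcal{G}\,\eta$, and since (H2) gives $\mathcal{G}<1$, choosing $\eta\ge (C_0+\Lambda N)/(1-\mathcal{G})$ forces $T(B_{\eta})\subseteq B_{\eta}$.

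Next I would establish continuity of $T$: if $z_n\to z$ in $C_{1-\gamma}[a,b]$, then $f(s,z_n(s))\to f(s,z(s))$ pointwise, and the uniform growth bound supplies an integrable majorant, so dominated convergence against the kernels $(t-s)^{\alpha-1}$ and $(b-s)^{\alpha-\gamma}$ yields $\|Tz_n-Tz\|_{C_{1-\gamma}}\to 0$. Then comes relative compactness, which I expect to be the main obstacle. Passing to $C[a,b]$ via the weight, I must show the family $\{(t-a)^{1-\gamma}(Tz)(t):z\in B_{\eta}\}$ is uniformly bounded (already done) and equicontinuous, after which Arzel\`a--Ascoli finishes. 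Only the weighted Volterra term $h_z(t):=(t-a)^{1-\gamma}\frac{1}{\Gamma(\alpha)}\int_a^t(t-s)^{\alpha-1}f(s,z(s))\,ds$ depends on $t$, so for $a\le t_1<t_2\le b$ I would estimate $|h_z(t_2)-h_z(t_1)|$ by splitting the integral over $[a,t_1]$ and $[t_1,t_2]$ and simultaneously controlling the difference of the weights $(t_2-a)^{1-\gamma}-(t_1-a)^{1-\gamma}$ and the difference of the kernels. The delicate point is that the singularity of $(t-s)^{\alpha-1}$ at $s=t$ together with the weight near $t=a$ must be handled so that the bound tends to $0$ as $t_2\to t_1$ \emph{uniformly} in $z\in B_{\eta}$ (the dependence on $z$ entering only through the uniform constant $N(1+\zeta\eta)$). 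This weighted-equicontinuity estimate, rather than the bookkeeping, is the technical heart of the argument.

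Finally, Schauder's fixed point theorem produces $z^{\ast}\in B_{\eta}$ with $Tz^{\ast}=z^{\ast}$, i.e.\ a solution of \eqref{ee3}. It remains to observe that $z^{\ast}\in C_{1-\gamma}^{\gamma}[a,b]$: the regularity assumption $f(\cdot,z(\cdot))\in C_{1-\gamma}^{\beta(1-\alpha)}[a,b]$ in (H1), combined with applying $D_{a^{+}}^{\gamma}$ to \eqref{ee3} as in the converse part of the proof of Lemma \ref{lee(1)}, gives $D_{a^{+}}^{\gamma}z^{\ast}=D_{a^{+}}^{\beta(1-\alpha)}f(\cdot,z^{\ast}(\cdot))\in C_{1-\gamma}[a,b]$, so that $z^{\ast}\in C_{1-\gamma}^{\gamma}[a,b]\subset C_{1-\gamma}^{\alpha,\beta}[a,b]$. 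By Lemma \ref{lee(1)}, $z^{\ast}$ is then the desired solution of \eqref{e8.1}--\eqref{e8.2}, which completes the argument.
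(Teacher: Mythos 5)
Your proposal follows essentially the same route as the paper: the same operator $\mathcal{T}$ built from \eqref{ee3}, invariance of a ball $\mathbb{B}_{r}$ via (H1)--(H2), continuity of $\mathcal{T}$, weighted equicontinuity plus Arzel\`a--Ascoli for relative compactness, and then Schauder's fixed point theorem. The equicontinuity estimate you flag as the technical heart is exactly what the paper carries out, using the Beta-function identity $\int_{a}^{t}(t-s)^{\alpha-1}(s-a)^{\gamma-1}\,ds=\mathcal{B}(\gamma,\alpha)(t-a)^{\alpha+\gamma-1}$ to reduce the weighted Volterra term to a multiple of $(t-a)^{\alpha}$, so your plan closes as intended.
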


\begin{proof}
Define the operator ${\large \mathcal{T}}:C_{1-\gamma }[a,b]\longrightarrow
C_{1-\gamma }[a,b]$ by%
\begin{eqnarray}
\left( {\large \mathcal{T}}z\right) (t) &=&\frac{(t-a)^{\gamma-1}}{\Gamma (\gamma)}\frac{e_{k}}{d\left( 1+\frac{c}{d}\right) }-\frac{1%
}{\left( 1+\frac{c}{d}\right) }\frac{(t-a)^{\gamma -1}}{\Gamma
(\gamma)}  \notag \\
&&\times \frac{1}{\Gamma (1-\gamma +\alpha )}\int_{a}^{b}(b-s)^{-\gamma
+\alpha}f(s,z(s))ds  \notag \\
&&+\frac{1}{\Gamma (\alpha )}\int_{a}^{t}(t-s)^{\alpha -1}f(s,z(s))ds.
\label{e8.10}
\end{eqnarray}%
Let $\mathbb{B}_{r}=\left\{ z\in C_{1-\gamma }[a,b]:\left\Vert z\right\Vert
_{C_{1-\gamma }}\leq r\right\} $ with ${\large r\geq }\frac{\Omega }{1-%
\mathcal{G}},$ for $\mathcal{G<}1,$ where%
\begin{eqnarray*}
\Omega  &:&=\bigg[\frac{1}{\Gamma (\gamma)}%
\frac{e_{k}}{d\left( 1+\frac{c}{d}\right) }+\left\vert \frac{1}{1+\frac{c}{d}%
}\right\vert \frac{1}{\Gamma (\gamma)} \\
&&\times \left[ \frac{(b-a)^{\alpha +1-\gamma }}{\Gamma (2-\gamma +\alpha)%
}+\frac{(b-a)^{2\alpha +1-\gamma }}{\Gamma (\alpha +1)}\right] N\bigg].
\end{eqnarray*}
The proof will be given by the following three steps:

Step1: We show that $\mathcal{T}(\mathbb{B}_{r})\subset \mathbb{B}_{r}$. By
hypothesis $(H_{2})$, we have
\begin{align*}
& \left\vert (\mathcal{T}z)(t)(t-a)^{1-\gamma }\right\vert  \\
& \leq \left\vert \frac{1}{\Gamma (\gamma)}%
\frac{e_{k}}{d\left( 1+\frac{c}{d}\right) }\right\vert  \\
& +\left\vert \frac{1}{1+\frac{c}{d}}\frac{1}{\Gamma
(\gamma)}\right\vert \frac{1}{\Gamma (1-\gamma +\alpha )}%
\int_{a}^{b}(b-s)^{-\gamma +\alpha}N(1+\zeta \left\vert z\right\vert )ds
\\
& +\frac{\left\vert (t-a)^{1-\gamma }\right\vert }{\Gamma (\alpha )}%
\int_{a}^{t}(t-s)^{\alpha -1}N(1+\zeta \left\vert z\right\vert )ds \\
& \leq \frac{1}{\Gamma (\gamma)}\frac{e_{k}}{%
d\left( 1+\frac{c}{d}\right) }+\left\vert \frac{1}{1+\frac{c}{d}}\right\vert
\frac{1}{\Gamma (\gamma)} \\
& \times \frac{1}{\Gamma (1-\gamma +\alpha )}\int_{a}^{b}(b-s)^{\alpha-\gamma}N(1+\zeta (s-a)^{\gamma -1}\Vert z\Vert _{C_{1-\gamma }})ds \\
& +\frac{\left\vert (t-a)^{1-\gamma }\right\vert }{\Gamma (\alpha )}%
\int_{a}^{t}(t-s)^{\alpha -1}N(1+\zeta (s-a)^{\gamma -1}\Vert z\Vert
_{C_{1-\gamma }})ds.
\end{align*}%
Note that, for any $z\in \mathbb{B}_{r}$, and for each $t\in (a,b]$, we get
\begin{eqnarray*}
&&\frac{1}{\Gamma (1-\gamma +\alpha )}\int_{a}^{b}(b-s)^{-\gamma +\alpha}N(1+\zeta (s-a)^{\gamma -1}{\large \Vert z\Vert _{C_{1-\gamma }}})ds \\
&\leq &\left[ \frac{(b-a)^{1-\gamma }}{\Gamma (2-\gamma +\alpha )}+\frac{%
\zeta r\Gamma (\gamma)}{\Gamma (\alpha +1)}\right] N(b-a)^{\alpha },
\end{eqnarray*}%
and%
\begin{eqnarray*}
&&\frac{\left\vert (t-a)^{1-\gamma }\right\vert }{\Gamma (\alpha )}%
\int_{a}^{t}(t-s)^{\alpha -1}N(1+\zeta (s-a)^{\gamma -1}{\large \Vert z\Vert
_{C_{1-\gamma }}})ds \\
&\leq &\left[ \frac{(t-a)^{\alpha }}{\Gamma (\alpha +1)}+\frac{\zeta r\Gamma
(\gamma)}{\Gamma (\alpha +1)}\right] N(t-a)^{\alpha +1-\gamma }.
\end{eqnarray*}%
Hence%
\begin{eqnarray*}
\left\vert (\mathcal{T}z)(t)(t-a)^{1-\gamma }\right\vert  &\leq
&\frac{1}{\Gamma (\gamma)}\frac{e_{k}}{d\left(
1+\frac{c}{d}\right) }+\left\vert \frac{1}{1+\frac{c}{d}}\right\vert
\frac{1}{\Gamma (\gamma)} \\
&&\times \left[ \frac{(b-a)^{1-\gamma }}{\Gamma (2-\gamma +\alpha)}+\frac{%
\zeta r\Gamma (\gamma)}{\Gamma (\alpha +1)}\right] N(b-a)^{\alpha } \\
&&+\left[ \frac{(t-a)^{\alpha }}{\Gamma (\alpha +1)}+\frac{\zeta r\Gamma
(\gamma)}{\Gamma (\alpha +1)}\right] N(t-a)^{\alpha +1-\gamma },
\end{eqnarray*}%
which yields%
\begin{eqnarray*}
{\large \Vert \mathcal{T}z\Vert _{C_{1-\gamma }}} &\leq &\frac{%
1}{\Gamma (\gamma)}\frac{e_{k}}{d\left( 1+\frac{c}{d}\right) }%
+\left\vert \frac{1}{1+\frac{c}{d}}\right\vert\frac{%
1}{\Gamma (\gamma)} \\
&&\times \bigg[\frac{(b-a)^{\alpha +1-\gamma }}{\Gamma (2-\gamma +\alpha)}%
+\frac{(b-a)^{2\alpha +1-\gamma }}{\Gamma (\alpha +1)}\bigg]N \\
&&+\frac{N\zeta r\Gamma (\gamma)}{\Gamma (\alpha +1)}\bigg[%
(b-a)^{\alpha }+(b-a)^{\alpha +1-\gamma }\bigg]
\end{eqnarray*}%
In the light of hypothesis (H2) and definition of $r$, we get $\Vert {\large
\mathcal{T}}z\Vert _{C_{1-\gamma }}\leq \mathcal{G}r+(1-\mathcal{G})r=r,$
that is, ${\large \mathcal{T(}}\mathbb{B}_{r})\subset \mathbb{B}_{r}.$

We shall prove that ${\large \mathcal{T}}$ is completely continuous.

Step 2. The operator ${\large \mathcal{T}}$ is continuous. Suppose that $%
\{z_{n}\}$ is a sequence such that $z_{n}\rightarrow z$ in $\mathbb{B}_{r}$\
as $n\rightarrow \infty $. Then for each $t\in (a,b],$ we have%
\begin{eqnarray*}
&&\left\vert \big((\mathcal{T}z_{n})(t)-(\mathcal{T}z)(t)\big)%
(t-a)^{1-\gamma }\right\vert  \\
&=&\left\vert \frac{1}{1+\frac{c}{d}}\right\vert\frac{%
1}{\Gamma (\gamma)} \\
&&\times \frac{1}{\Gamma (1-\gamma +\alpha )}\int_{a}^{b}(b-s)^{-\gamma
+\alpha}\left\vert f\big(s,z_{n}(s)\big)-f\big(s,z(s)\big)\right\vert ds
\\
&&+\frac{(t-a)^{1-\gamma }}{\Gamma (\alpha )}\int_{a}^{t}(t-s)^{\alpha
-1}\left\vert f\big(s,z_{n}(s)\big)-f\big(s,z(s)\big)\right\vert ds \\
&\leq &\left\vert \frac{1}{1+\frac{c}{d}}\right\vert\frac{%
1}{\Gamma (\gamma)} \\
&&\times \frac{\Gamma (\gamma)}{\Gamma (\alpha +1)}(b-a)^{\alpha
}\left\Vert f\big(\cdot ,z_{n}(\cdot )\big)-f\big(\cdot ,z(\cdot )\big)%
\right\Vert _{C_{1-\gamma }} \\
&&+\frac{\Gamma (\gamma)(t-a)^{1-\gamma +\alpha }}{\Gamma (-\gamma+\alpha)}\left\Vert f\big(\cdot ,z_{n}(\cdot )\big)-f\big(\cdot
,z(\cdot )\big)\right\Vert _{C_{1-\gamma }}.
\end{eqnarray*}%
\ Since $f$ is continuous on $(a,b]$, and $z_{n}\rightarrow z,$\ this
implies
\begin{equation*}
\Vert (\mathcal{T}z_{n}-\mathcal{T}z)\Vert _{C_{1-\gamma }}\longrightarrow
0~~as~~n\longrightarrow \infty ,
\end{equation*}%
which means that operator $\mathcal{T}$ is continuous on $\mathbb{B}_{r}$.%
\newline

Step 3. We prove that $\mathcal{T}(\mathbb{B}_{r})$ is relatively compact.
From Step 1, we have ${\large \mathcal{T(}}\mathbb{B}_{r})\subset \mathbb{B}%
_{r}.$ It follows that ${\large \mathcal{T(}}\mathbb{B}_{r})$ is uniformly
bounded. Moreover, we show that operator $\mathcal{T}$ is equicontinuous on $%
\mathbb{B}_{r}$. Indeed,for any $a<t_{1}<t_{2}<b$ and $z\in \mathbb{B}_{r}$,
we get%
\begin{eqnarray*}
&&\left\vert (t_{2}-a)^{1-\gamma }\big({\large \mathcal{T}}z\big)%
(t_{2})-(t_{1}-a)^{1-\gamma }\big({\large \mathcal{T}}z\big)%
(t_{1})\right\vert  \\
&\leq &\frac{\left\vert
(t_{2}-a)^{^{n-k}}-(t_{1}-a)^{^{n-k}}\right\vert }{\Gamma (\gamma)}%
\frac{e_{k}}{d\left( 1+\frac{c}{d}\right) }+\left\vert \frac{1}{1+\frac{c}{d}%
}\right\vert \frac{\left\vert
(t_{2}-a)^{^{n-k}}-(t_{1}-a)^{^{n-k}}\right\vert }{\Gamma (\gamma)} \\
&&\times \frac{1}{\Gamma (1-\gamma +\alpha )}\int_{a}^{b}(b-s)^{1-\gamma
+\alpha -1}\left\vert f\big(s,z(s)\big)\right\vert ds \\
&&+\dfrac{1}{\Gamma (\alpha )}\left\vert (t_{2}-a)^{1-\gamma
}\int_{a}^{t_{2}}(t_{2}-s)^{\alpha -1}f\big(s,z(s)\big)ds\right.  \\
&&\left. -(t_{1}-a)^{1-\gamma }\int_{a}^{t_{1}}(t_{1}-s)^{\alpha -1}f\big(%
s,z(s)\big)ds\right\vert  \\
&\leq &\frac{\left\vert
(t_{2}-a)^{^{n-k}}-(t_{1}-a)^{^{n-k}}\right\vert }{\Gamma (\gamma)}%
\left[ \frac{e_{k}}{d\left( 1+\frac{c}{d}\right) }\right.  \\
&&\left. +\left\vert \frac{1}{1+\frac{c}{d}}\right\vert \frac{\left\Vert
f\right\Vert _{C_{1-\gamma }}}{\Gamma (1-\gamma +\alpha )}%
\int_{a}^{b}(b-s)^{-\gamma +\alpha}(s-a)^{\gamma -1}ds\right]  \\
&&+\dfrac{\Vert f\Vert _{C_{1-\gamma }}}{\Gamma (\alpha )}\left\vert
(t_{2}-a)^{1-\gamma }\int_{a}^{t_{2}}(t_{2}-s)^{\alpha -1}(s-a)^{\gamma
-1}ds\right.  \\
&&\left. -(t_{1}-a)^{1-\gamma }\int_{a}^{t_{1}}(t_{1}-s)^{\alpha
-1}(s-a)^{\gamma -1}ds\right\vert  \\
&\leq &\frac{\left\vert
(t_{2}-a)^{^{n-k}}-(t_{1}-a)^{^{n-k}}\right\vert }{\Gamma (\gamma)}%
\left[ \frac{e_{k}}{d\left( 1+\frac{c}{d}\right) }\right.  \\
&&+\left. \left\vert \frac{1}{1+\frac{c}{d}}\right\vert \frac{\Gamma (\gamma)}{\Gamma (\alpha +1)}(b-a)^{\alpha }\left\Vert f\right\Vert
_{C_{1-\gamma }}\right]  \\
&&+\dfrac{\Vert f\Vert _{C_{1-\gamma }}}{\Gamma (\alpha )}\mathcal{B}(\gamma
-n+1,\alpha )\left\vert (t_{2}-a)^{\alpha }-(t_{1}-a)^{\alpha }\right\vert
\end{eqnarray*}%
 which tends to zero as $t_{2}\rightarrow t_{1},$ independent of $z\in
\mathbb{B}_{r}$, where $\mathcal{B(\cdot },\mathcal{\cdot )}$ is a Beta
function. Thus we conclude that $\mathcal{T}(\mathbb{B}_{r})$ is
equicontinuous on $\mathbb{B}_{r}$ and therefore $\mathcal{T}(\mathbb{B}_{r})
$ is relatively compact.  As a consequence of Steps 1 to 3 together with
Arzela-Ascoli theorem, we can conclude that $\mathcal{T}:\mathbb{B}%
_{r}\rightarrow \mathbb{B}_{r}$ is completely continuous operator.

An application of Schauder's fixed point theorem shows that there exists at
least a fixed point $z$ of $\mathcal{T}$ in $C_{1-\gamma }[a,b]$. This fixed
point $z$ is the solution to (\ref{e8.1})-(\ref{e8.2}) in $C_{1-\gamma
}^{\gamma }[a,b],$ and the proof is completed.
\end{proof}

We will study the next existence result by using Schaefer fixed point
theorem. To this end, we change hypothesis (H1) into the following one:

\begin{itemize}
\item[ (H3)] $f:(a,b]\times
%TCIMACRO{\U{211d} }%
%BeginExpansion
\mathbb{R}
%EndExpansion
\rightarrow
%TCIMACRO{\U{211d} }%
%BeginExpansion
\mathbb{R}
%EndExpansion
$ is a function such that $f(\cdot ,z(\cdot ))\in C_{1-\gamma }^{\beta
(1-\alpha )}[a,b]$ for any $z\in C_{1-\gamma }[a,b]$ and there exist a
function $\eta (t)\in C_{1-\gamma }[a,b]$ such that%
\begin{equation*}
\left\vert f\big(t,z\big)\right\vert \leq \eta (t),\text{ for all }t\in
(a,b],\text{ }z\in
%TCIMACRO{\U{211d} }%
%BeginExpansion
\mathbb{R}
%EndExpansion
.
\end{equation*}
\end{itemize}

\begin{theorem}
\label{th3.3} Assume that $\ $(H3) holds. Then Hilfer boundary value problem %
\eqref{e8.1}-\eqref{e8.2} has at least one solution in $C_{1-\gamma
}^{\gamma }[a,b]\subset C_{1-\gamma }^{\alpha ,\beta }[a,b]$.
\end{theorem}

\begin{proof}
As in the proof of Theorem \ref{th8.1}, one can repeat Steps 1 to 3 and show
that operator ${\large \mathcal{T}}:C_{1-\gamma }[a,b]\longrightarrow
C_{1-\gamma }[a,b]$ defined in (\ref{e8.10}) is a completely continuous. It
remains to prove that%
\begin{equation*}
\Delta =\left\{ z\in {\large C_{n-\gamma }[a,b]}:z=\lambda \mathcal{T}z,%
\text{ for some }\lambda \in (0,1)\right\}
\end{equation*}%
is a bounded set. Let $z\in \Delta $ and $\lambda \in (0,1)$ be such that $%
z=\lambda Tz.$ By hypothesis (H3) and Eq.(\ref{e8.10}), then for all $t\in
\lbrack a,b],$ we have%
\begin{align*}
& \left\vert {\large \mathcal{T}z(t)(t-a)}^{n-\gamma }\right\vert  \\
& \leq \sum_{k=1}^{n}\frac{(t-a)^{n-k}}{\Gamma (\gamma -k+1)}\frac{e_{k}}{%
d\left( 1+\frac{c}{d}\right) } \\
& +\left\vert \frac{1}{1+\frac{c}{d}}\right\vert \sum_{k=1}^{n}\frac{%
(t-a)^{n-k}}{\Gamma (\gamma -k+1)}\frac{1}{\Gamma (n-\gamma +\alpha )}%
\int_{a}^{b}(b-s)^{n-\gamma +\alpha -1}\eta (s)ds \\
& +\frac{\left\vert (t-a)^{n-\gamma }\right\vert }{\Gamma (\alpha )}%
\int_{a}^{t}(t-s)^{\alpha -1}\eta (s)ds \\
& \leq \sum_{k=1}^{n}\frac{(t-a)^{n-k}}{\Gamma (\gamma -k+1)}\frac{e_{k}}{%
d\left( 1+\frac{c}{d}\right) } \\
& +\left\vert \frac{1}{1+\frac{c}{d}}\right\vert \sum_{k=1}^{n}\frac{%
(t-a)^{n-k}}{\Gamma (\gamma -k+1)}\frac{1}{\Gamma (n-\gamma +\alpha )}%
\int_{a}^{b}(b-s)^{n-\gamma +\alpha -1}(s-a)^{\gamma -n}\Vert \mathcal{\eta }%
\Vert _{C_{n-\gamma }}ds \\
& +\frac{\left\vert (t-a)^{n-\gamma }\right\vert }{\Gamma (\alpha )}%
\int_{a}^{t}(t-s)^{\alpha -1}(s-a)^{\gamma -n}\Vert \mathcal{\eta }\Vert
_{C_{n-\gamma }}ds.
\end{align*}%
That is%
\begin{eqnarray}
&&\Vert \mathcal{T}z\Vert _{C_{n-\gamma }}  \notag \\
&\leq &\sum_{k=1}^{n}\frac{(b-a)^{n-k}}{\Gamma (\gamma -k+1)}\frac{e_{k}}{%
d\left( 1+\frac{c}{d}\right) }  \notag \\
&&+\left[ \left\vert \frac{1}{1+\frac{c}{d}}\right\vert \sum_{k=1}^{n}\frac{%
(b-a)^{-k}}{\Gamma (\gamma -k+1)}\frac{\Gamma (\alpha )}{\mathcal{B(\alpha }%
,1\mathcal{)}}+\frac{\mathcal{B(}\gamma -n+1,1\mathcal{)}}{\Gamma (\alpha
)(b-a)^{\gamma }}\right] (b-a)^{n+\alpha }\Vert \mathcal{\eta }\Vert
_{C_{n-\gamma }}  \notag \\
&:&=\ell .  \label{ss3}
\end{eqnarray}

Since $\lambda \in (0,1)$ then $z<\mathcal{T}z.$ The last inequality with
Eq.(\ref{ss3}) leads us to%
\begin{equation*}
\Vert z\Vert _{C_{n-\gamma }}<\Vert \mathcal{T}z\Vert _{C_{n-\gamma }}\leq
\ell
\end{equation*}%
Which proves that $\Delta $ is bounded. By using Schaefer fixed point
theorem, the proof can be completed.
\end{proof}

Finally, we present the existence result for the problem \eqref{e8.1}-%
\eqref{e8.2}, which is based on Krasnosel'skii fixed point theorem. For this
end, we change hypothesis (H1) into the following one:

\begin{itemize}
\item[ (H4)] $f:(a,b]\times
%TCIMACRO{\U{211d} }%
%BeginExpansion
\mathbb{R}
%EndExpansion
\rightarrow
%TCIMACRO{\U{211d} }%
%BeginExpansion
\mathbb{R}
%EndExpansion
$ is a function such that $f(\cdot ,z(\cdot ))\in C_{n-\gamma }^{\beta
(n-\alpha )}[a,b]$ for any $z\in C_{n-\gamma }[a,b]$ and there exists
constant $L>0$ such that%
\begin{equation*}
\left\vert f\big(t,z\big)-f\big(t,w\big)\right\vert \leq L\left\vert
z-w\right\vert ,\text{ }\forall t\in (a,b],\text{ }z,w\in
%TCIMACRO{\U{211d}}%
%BeginExpansion
\mathbb{R}%
%EndExpansion
\end{equation*}
And we consider the following hypothesis:

\item[ (H5)] The inequality%
\begin{eqnarray*}
\mathcal{W} &:&=\bigg[\left\vert \frac{1}{1+\frac{c}{d}}\right\vert
\sum_{k=1}^{n}\frac{(b-a)^{n-k}}{\Gamma (\gamma -k+1)}+\frac{\mathcal{B}%
(\gamma -n,\alpha +1)}{\Gamma (\gamma -n)}\bigg] \\
&&\times \frac{\Gamma (\gamma -n)(b-a)^{\alpha }}{\mathcal{B}(\gamma
-n,1)\Gamma (\alpha +1)}L<1
\end{eqnarray*}
is holds.
\end{itemize}

\begin{theorem}
\label{th3.4} Assume that hypotheses (H4) and (H5) are satisfied. If%
\begin{equation}
\left\vert \frac{1}{1+\frac{c}{d}}\right\vert \sum_{k=1}^{n}\frac{%
(b-a)^{n-k+\alpha }}{\Gamma (\gamma -k+1)}\frac{\Gamma (\gamma -n+1)}{\Gamma
(\alpha +1)}L<1.  \label{e2}
\end{equation}
Then Hilfer boundary value problem \eqref{e8.1}-\eqref{e8.2} has at least
one solution in $C_{n-\gamma }^{\gamma }[a,b]\subset C_{n-\gamma }^{\alpha
,\beta }[a,b]$.
\end{theorem}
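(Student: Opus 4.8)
The strategy is to invoke Krasnosel'skii's fixed point theorem, whose hypotheses require writing the solution operator $\mathcal{T}$ of \eqref{e8.10} as a sum $\mathcal{T}=\mathcal{A}+\mathcal{B}$ of a contraction and a compact map on a closed ball $\mathbb{B}_{r}=\{z\in C_{1-\gamma}[a,b]:\|z\|_{C_{1-\gamma}}\leq r\}$. I would let $\mathcal{A}$ collect the two boundary terms,
\[
(\mathcal{A}z)(t)=\frac{(t-a)^{\gamma-1}}{\Gamma(\gamma)}\frac{e_{k}}{d\left(1+\frac{c}{d}\right)}-\frac{1}{1+\frac{c}{d}}\frac{(t-a)^{\gamma-1}}{\Gamma(\gamma)}\frac{1}{\Gamma(1-\gamma+\alpha)}\int_{a}^{b}(b-s)^{\alpha-\gamma}f(s,z(s))\,ds,
\]
and let $\mathcal{B}$ be the Volterra part $(\mathcal{B}z)(t)=\frac{1}{\Gamma(\alpha)}\int_{a}^{t}(t-s)^{\alpha-1}f(s,z(s))\,ds$. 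A fixed point $z=\mathcal{A}z+\mathcal{B}z$ is exactly a solution of the integral equation \eqref{ee3}, so by Lemma \ref{lee(1)} it solves the BVP \eqref{e8.1}--\eqref{e8.2} and lies in $C_{1-\gamma}^{\gamma}[a,b]$.

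I would first verify the self-mapping condition $\mathcal{A}z+\mathcal{B}w\in\mathbb{B}_{r}$ for all $z,w\in\mathbb{B}_{r}$. The Lipschitz assumption (H4) yields the linear growth bound $|f(t,z)|\leq|f(t,0)|+L|z|$ with $f(\cdot,0)\in C_{1-\gamma}[a,b]$; inserting this into the two pieces and performing the same beta-function estimates as in Step~1 of Theorem~\ref{th8.1} gives an inequality of the shape $\|\mathcal{A}z+\mathcal{B}w\|_{C_{1-\gamma}}\leq\Omega_{0}+\mathcal{W}r$, where $\mathcal{W}$ is the composite constant of hypothesis (H5). Since (H5) assumes $\mathcal{W}<1$, choosing $r\geq\Omega_{0}/(1-\mathcal{W})$ makes $\mathbb{B}_{r}$ invariant.

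Next I would show $\mathcal{A}$ is a contraction. On subtracting, the constant boundary term drops out and the factor $(t-a)^{\gamma-1}$ cancels the weight $(t-a)^{1-\gamma}$; using (H4) together with $\int_{a}^{b}(b-s)^{\alpha-\gamma}(s-a)^{\gamma-1}\,ds=(b-a)^{\alpha}\,\mathcal{B}(\gamma,\alpha-\gamma+1)$ and the cancellation $\Gamma(1-\gamma+\alpha)=\Gamma(\alpha-\gamma+1)$ produces
\[
\|\mathcal{A}z-\mathcal{A}w\|_{C_{1-\gamma}}\leq\left|\frac{1}{1+\frac{c}{d}}\right|\frac{(b-a)^{\alpha}}{\Gamma(\alpha+1)}L\,\|z-w\|_{C_{1-\gamma}},
\]
whose constant is precisely the quantity assumed to be $<1$ in \eqref{e2}. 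For the compactness of $\mathcal{B}$ I would reuse Steps~2 and~3 of Theorem~\ref{th8.1}: continuity of $\mathcal{B}$ follows from the continuity of $f$, uniform boundedness from the self-mapping step, and equicontinuity from the estimate on $|(t_{2}-a)^{1-\gamma}(\mathcal{B}z)(t_{2})-(t_{1}-a)^{1-\gamma}(\mathcal{B}z)(t_{1})|$, so that Arzela--Ascoli delivers relative compactness of $\mathcal{B}(\mathbb{B}_{r})$.

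Having verified all three hypotheses of Krasnosel'skii's theorem, I conclude that $\mathcal{A}+\mathcal{B}$ has a fixed point in $\mathbb{B}_{r}$, which is the desired solution. The step I expect to be the main obstacle is the self-mapping condition: one must split the linear growth bound correctly between $\mathcal{A}$ and $\mathcal{B}$ so that the resulting coefficient matches the composite constant $\mathcal{W}$ of (H5), because the contraction estimate \eqref{e2} governs $\mathcal{A}$ alone and says nothing about invariance of $\mathbb{B}_{r}$. Some care is also needed with the $n$ appearing in the statement, which in the present setting $0<\alpha<1$ reduces to $n=1$, so that $C_{n-\gamma}=C_{1-\gamma}$ throughout.
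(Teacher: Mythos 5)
Your proposal is correct and follows essentially the same route as the paper: a Krasnosel'skii decomposition in which the nonlocal boundary integral is the contraction (whose Lipschitz constant, after the beta-function cancellation you carry out, is exactly the quantity in \eqref{e2} with $n=1$) and the Volterra operator is the completely continuous part, with invariance of the ball obtained from (H4)--(H5). The only difference is that you place the constant term $\frac{(t-a)^{\gamma-1}}{\Gamma(\gamma)}\frac{e_{k}}{d\left(1+\frac{c}{d}\right)}$ in the contraction $\mathcal{A}$ whereas the paper puts it in the compact summand $\mathcal{T}_{2}$; since that term cancels in $\mathcal{A}z-\mathcal{A}w$ and adding a fixed function does not affect relative compactness, the two splittings are interchangeable.
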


\begin{proof}
Consider the operator ${\large \mathcal{T}}$ is defined as in Theorem \ref%
{th8.1}.

Now, we need to analyze the operator ${\large \mathcal{T}}$ into sum two
operators ${\large \mathcal{T}}_{1}+{\large \mathcal{T}}_{2}$ as follows%
\begin{equation*}
{\large \mathcal{T}}_{1}z(t)=-\frac{1}{\left( 1+\frac{c}{d}\right) }%
\sum_{k=1}^{n}\frac{(t-a)^{\gamma -k}}{\Gamma (\gamma -k+1)}\frac{1}{\Gamma
(n-\gamma +\alpha )}\int_{a}^{b}(b-s)^{n-\gamma +\alpha -1}f(s,z(s))ds
\end{equation*}%
and
\begin{equation*}
{\large \mathcal{T}}_{2}z(t)=\sum_{k=1}^{n}\frac{(t-a)^{\gamma -k}}{\Gamma
(\gamma -k+1)}\frac{e_{k}}{d\left( 1+\frac{c}{d}\right) }+\frac{1}{\Gamma
(\alpha )}\int_{a}^{t}(t-s)^{\alpha -1}f(s,z(s))ds.
\end{equation*}%
\

Set $\widetilde{f}=f(s,0)$ and consider the ball $\mathbb{B}_{\epsilon
}=\{z\in C_{n-\gamma ;\psi }([a,b]:\left\Vert z\right\Vert _{C_{n-\gamma
;\psi }}\leq \epsilon \}\ $with $\epsilon \geq \frac{\Lambda }{1-\mathcal{W}}%
,\mathcal{W<}1$ where
\begin{eqnarray}
\Lambda  &=&\bigg[\left\vert \frac{1}{1+\frac{c}{d}}\right\vert
\sum_{k=1}^{n}\frac{(b-a)^{n-k}}{\Gamma (\gamma -k+1)}+\frac{\mathcal{B}%
(\gamma -n,\alpha +1)}{\Gamma (\gamma -n)}\bigg]  \notag \\
&&\times \frac{\Gamma (\gamma -n)(b-a)^{\alpha }}{\mathcal{B}(\gamma
-n,1)\Gamma (\alpha +1)}\left\Vert \widetilde{f}\right\Vert _{C_{n-\gamma
}}+\sum_{k=1}^{n}\frac{(b-a)^{n-k}}{\Gamma (\gamma -k+1)}\frac{e_{k}}{%
d\left( 1+\frac{c}{d}\right) }.  \label{E1}
\end{eqnarray}

The proof will be given in three stages.

\textbf{Stage 1:} We prove that ${\large \mathcal{T}}_{1}z+{\large \mathcal{T%
}}_{2}w\in \mathbb{B}_{r}$ for every $z,w\in \mathbb{B}_{\epsilon }.$

By assumpition (H4), then for every $z\in \mathbb{B}_{\epsilon },$and $t\in
(a,b]$, we have%
\begin{eqnarray*}
&&\left\vert (t-a)^{n-\gamma }{\large \mathcal{T}}_{1}z(t)\right\vert  \\
&\leq &\left\vert \frac{1}{1+\frac{c}{d}}\right\vert \sum_{k=1}^{n}\frac{%
(t-a)^{n-k}}{\Gamma (\gamma -k+1)}\frac{1}{\Gamma (n-\gamma +\alpha )} \\
&&\times \int_{a}^{b}(b-s)^{n-\gamma +\alpha -1}\bigg[\left\vert
f(s,z(s))-f(s,0)\right\vert +\left\vert f(s,0)\right\vert \bigg]ds \\
&\leq &\left\vert \frac{1}{1+\frac{c}{d}}\right\vert \sum_{k=1}^{n}\frac{%
(t-a)^{n-k}}{\Gamma (\gamma -k+1)}\frac{1}{\Gamma (n-\gamma +\alpha )} \\
&&\times \int_{a}^{b}(b-s)^{n-\gamma +\alpha -1}(s-a)^{\gamma -n}\bigg[%
L\left\Vert z\right\Vert _{C_{n-\gamma }}+\left\Vert \widetilde{f}%
\right\Vert _{C_{n-\gamma }}\bigg]ds \\
&\leq &\left\vert \frac{1}{1+\frac{c}{d}}\right\vert \sum_{k=1}^{n}\frac{%
(t-a)^{n-k}}{\Gamma (\gamma -k+1)}\frac{\Gamma (\gamma -n+1)}{\Gamma (\alpha
+1)}(t-a)^{\alpha }\bigg[L\epsilon +\left\Vert \widetilde{f}\right\Vert
_{C_{n-\gamma }}\bigg].
\end{eqnarray*}

This gives%
\begin{eqnarray}
&&\left\Vert {\large \mathcal{T}}_{1}z\right\Vert _{C_{n-\gamma }}  \notag \\
&\leq &\left\vert \frac{1}{1+\frac{c}{d}}\right\vert \sum_{k=1}^{n}\frac{%
(b-a)^{n-k+\alpha }}{\Gamma (\gamma -k+1)}\frac{\Gamma (\gamma -n+1)}{\Gamma
(\alpha +1)}\bigg[L\epsilon +\left\Vert \widetilde{f}\right\Vert
_{C_{n-\gamma }}\bigg].  \label{q1}
\end{eqnarray}

For operator ${\large \mathcal{T}}_{2}$, we have
\begin{eqnarray*}
&&\left\vert (t-a)^{n-\gamma }{\large \mathcal{T}}_{2}w(t)\right\vert  \\
&\leq &\sum_{k=1}^{n}\frac{(t-a)^{n-k}}{\Gamma (\gamma -k+1)}\frac{e_{k}}{%
d\left( 1+\frac{c}{d}\right) } \\
&&+\frac{(t-a)^{n-\gamma }}{\Gamma (\alpha )}\int_{a}^{t}(t-s)^{\alpha -1}%
\bigg[\left\vert f(s,w(s))-f(s,0)\right\vert +\left\vert f(s,0)\right\vert %
\bigg]ds \\
&\leq &\sum_{k=1}^{n}\frac{(t-a)^{n-k}}{\Gamma (\gamma -k+1)}\frac{e_{k}}{%
d\left( 1+\frac{c}{d}\right) } \\
&&+\frac{(t-a)^{n-\gamma }}{\Gamma (\alpha )}\int_{a}^{t}(t-s)^{\alpha
-1}(s-a)^{\gamma -n}\bigg[L\left\Vert w\right\Vert _{C_{n-\gamma
}}+\left\Vert \widetilde{f}\right\Vert _{C_{n-\gamma }}\bigg]ds.
\end{eqnarray*}

For every $w\in \mathbb{B}_{\epsilon },$and $t\in (a,b]$, this gives%
\begin{eqnarray}
\left\Vert {\large \mathcal{T}}_{2}w\right\Vert _{C_{n-\gamma }} &\leq
&\sum_{k=1}^{n}\frac{(b-a)^{n-k}}{\Gamma (\gamma -k+1)}\frac{e_{k}}{d\left(
1+\frac{c}{d}\right) }  \notag \\
&&+\frac{\Gamma (\gamma -n+1)(b-a)^{\alpha }}{\Gamma (\gamma -n+\alpha +1)}%
\bigg[L\epsilon +\left\Vert \widetilde{f}\right\Vert _{C_{n-\gamma }}\bigg].
\label{q2}
\end{eqnarray}

From Eqs.(\ref{q1}),(\ref{q2}), and using hypothesis (H5) with Eq.(\ref{E1}%
), we get%
\begin{eqnarray*}
&&\left\Vert {\large \mathcal{T}}_{1}z+{\large \mathcal{T}}_{2}w\right\Vert
_{C_{n-\gamma }} \\
&\leq &\left\Vert {\large \mathcal{T}}_{1}z\right\Vert _{C_{n-\gamma
}}+\left\Vert {\large \mathcal{T}}_{2}w\right\Vert _{C_{n-\gamma }} \\
&\leq &\bigg[\left\vert \frac{1}{1+\frac{c}{d}}\right\vert \sum_{k=1}^{n}%
\frac{(b-a)^{n-k}}{\Gamma (\gamma -k+1)}+\frac{\Gamma (\alpha +1)}{\Gamma
(\gamma -n+\alpha +1)}\bigg] \\
&&\times \frac{\Gamma (\gamma -n+1)(b-a)^{\alpha }}{\Gamma (\alpha +1)}\bigg[%
L\epsilon +\left\Vert \widetilde{f}\right\Vert _{C_{n-\gamma }}\bigg] \\
&&+\sum_{k=1}^{n}\frac{(b-a)^{n-k}}{\Gamma (\gamma -k+1)}\frac{e_{k}}{%
d\left( 1+\frac{c}{d}\right) } \\
&\leq &\mathcal{W}\epsilon +(1-\mathcal{W})\epsilon =\epsilon .
\end{eqnarray*}

This proves that ${\large \mathcal{T}}_{1}z+{\large \mathcal{T}}_{2}w\in
\mathbb{B}_{r}$ for every $z,w\in \mathbb{B}_{r}.$

\textbf{Stage 2:} We prove that the operator ${\large \mathcal{T}}_{1}$ is a
contration mapping on $\mathbb{B}_{r}.$

For any $z,w\in \mathbb{B}_{r},$ and for $t\in (a,b],$ then by assumptions
(H4), we have%
\begin{eqnarray*}
&&\left\vert (t-a)^{n-\gamma }{\large \mathcal{T}}_{1}z(t)-(t-a)^{n-\gamma }%
{\large \mathcal{T}}_{1}w(t)\right\vert  \\
&\leq &\left\vert \frac{1}{1+\frac{c}{d}}\right\vert \sum_{k=1}^{n}\frac{%
(t-a)^{n-k}}{\Gamma (\gamma -k+1)}\frac{1}{\Gamma (n-\gamma +\alpha )} \\
&&\times \int_{a}^{b}(b-s)^{n-\gamma +\alpha -1}\bigg[\left\vert
f(s,z(s))-f(s,w(s))\right\vert \bigg]ds \\
&\leq &\left\vert \frac{1}{1+\frac{c}{d}}\right\vert \sum_{k=1}^{n}\frac{%
(t-a)^{n-k}}{\Gamma (\gamma -k+1)} \\
&&\times \frac{1}{\Gamma (n-\gamma +\alpha )}\int_{a}^{b}(b-s)^{n-\gamma
+\alpha -1}L\left\vert z(s)-w(s))\right\vert ds \\
&\leq &\left\vert \frac{1}{1+\frac{c}{d}}\right\vert \sum_{k=1}^{n}\frac{%
(t-a)^{n-k}}{\Gamma (\gamma -k+1)}\frac{\Gamma (\gamma -n+1)}{\Gamma (\alpha
+1)}(b-s)^{\alpha }L\left\Vert z-w\right\Vert _{C_{n-\gamma }}
\end{eqnarray*}%
This gives,%
\begin{eqnarray*}
&&\left\Vert {\large \mathcal{T}}_{1}z-{\large \mathcal{T}}_{1}w\right\Vert
_{C_{n-\gamma }} \\
&\leq &\left\vert \frac{1}{1+\frac{c}{d}}\right\vert \sum_{k=1}^{n}\frac{%
(b-a)^{n-k+\alpha }}{\Gamma (\gamma -k+1)}\frac{\Gamma (\gamma -n+1)}{\Gamma
(\alpha +1)}L\left\Vert z-w\right\Vert _{C_{n-\gamma }}.
\end{eqnarray*}

The operator ${\large \mathcal{T}}_{1}$ is contraction mapping due to
assumption Eqs.(\ref{e2}).

\textbf{Stage 3:} We show that the operator ${\large \mathcal{T}}_{2}$ is
completely continuous on $\mathbb{B}_{\epsilon }.$

Firstly, from the continuity of $f$, we conclude that the operator ${\large
\mathcal{T}}_{2}:\mathbb{B}_{\epsilon }\rightarrow \mathbb{B}_{\epsilon }$
i.e. ${\large \mathcal{T}}_{2}$ is continuous on $\mathbb{B}_{\epsilon }$.
Next, we show that for all $\epsilon >0,$ there exists some $\epsilon
^{^{\prime }}>0$ such that $\left\Vert {\large \mathcal{T}}_{2}z\right\Vert
_{C_{n-\gamma }}\leq \epsilon ^{^{\prime }}.$ According to stage 1, for $%
z\in \mathbb{B}_{\epsilon },$ we know that%
\begin{eqnarray*}
\left\Vert {\large \mathcal{T}}_{2}z\right\Vert _{C_{n-\gamma }} &\leq
&\sum_{k=1}^{n}\frac{(b-a)^{n-k}}{\Gamma (\gamma -k+1)}\frac{e_{k}}{d\left(
1+\frac{c}{d}\right) } \\
&&+\mathcal{B}(\gamma -n+1,\alpha )\frac{(b-a)^{\alpha }}{\Gamma (\alpha )}%
\bigg[L\epsilon +\left\Vert \widetilde{f}\right\Vert _{C_{n-\gamma }}\bigg].
\end{eqnarray*}%
which is independent of $t$ and $z$, hence there exists
\begin{equation*}
\epsilon ^{^{\prime }}=\sum_{k=1}^{n}\frac{(b-a)^{n-k}}{\Gamma (\gamma -k+1)}%
\frac{e_{k}}{d\left( 1+\frac{c}{d}\right) }+\mathcal{B}(\gamma -n+1,\alpha )%
\frac{(b-a)^{\alpha }}{\Gamma (\alpha )}\bigg[L\epsilon +\left\Vert
\widetilde{f}\right\Vert _{C_{n-\gamma }}
\end{equation*}%
such that $\left\Vert {\large \mathcal{T}}_{2}z\right\Vert _{C_{n-\gamma
}}\leq \epsilon ^{^{\prime }}.$ So ${\large \mathcal{T}}_{2}$ is uniformly
bounded set on $\mathbb{B}_{\epsilon }.$

Finally, to prove that ${\large \mathcal{T}}_{2}$ is equicontinuous in $%
\mathbb{B}_{\epsilon }$, for any $z\in \mathbb{B}_{\epsilon }$ and $%
t_{1},t_{2}\in (a,b]$ with $t_{1}<t_{2},$ we have%
\begin{eqnarray*}
&&\left\vert (t_{2}-a)^{n-\gamma }\mathcal{T}_{2}z(t_{2})-(t_{1}-a)^{n-%
\gamma }\mathcal{T}_{2}z(t_{1})\right\vert  \\
&=&\left\vert \sum_{k=1}^{n}\frac{(t_{2}-a)^{n-k}-(t_{1}-a)^{n-k}}{\Gamma
(\gamma -k+1)}\frac{e_{k}}{d\left( 1+\frac{c}{d}\right) }\right.  \\
&&+\frac{(t_{2}-a)^{n-\gamma }}{\Gamma (\alpha )}%
\int_{a}^{t_{2}}(t_{2}-s))^{\alpha -1}f(s,z(s))ds \\
&&-\left. \frac{(t_{1}-a)^{n-\gamma }}{\Gamma (\alpha )}%
\int_{a}^{t_{1}}(t_{1}-s))^{\alpha -1}f(s,z(s))ds\right\vert  \\
&\leq &\sum_{k=1}^{n}\frac{\left\vert
(t_{2}-a)^{n-k}-(t_{1}-a)^{n-k}\right\vert }{\Gamma (\gamma -k+1)}\frac{e_{k}%
}{d\left( 1+\frac{c}{d}\right) } \\
&&\left\vert +\frac{(t_{2}-a)^{n-\gamma }}{\Gamma (\alpha )}%
\int_{a}^{t_{2}}(t_{2}-s))^{\alpha -1}(s-a)^{\gamma -n}\left\Vert
f\right\Vert _{C_{n-\gamma ;\psi }[a,b]}ds\right.  \\
&&\left. -\frac{(t_{1}-a)^{n-\gamma }}{\Gamma (\alpha )}%
\int_{a}^{t_{1}}(t_{1}-s))^{\alpha -1}(s-a)^{\gamma -n}\left\Vert
f\right\Vert _{C_{n-\gamma ;\psi }[a,b]}ds\right\vert  \\
&=&\sum_{k=1}^{n}\frac{\left\vert (t_{2}-a)^{n-k}-(t_{1}-a)^{n-k}\right\vert
}{\Gamma (\gamma -k+1)}\frac{e_{k}}{d\left( 1+\frac{c}{d}\right) } \\
&&+\frac{\mathcal{B}(\gamma -n+1)}{\Gamma (\alpha )}\left\Vert f\right\Vert
_{C_{n-\gamma ;\psi }[a,b]}\left\vert (t_{2}-a)^{\alpha }-(t_{1}-a)^{\alpha
}\right\vert .
\end{eqnarray*}%
Observe that the right-hand side of the above inequality is independent of $%
z.$ So
\begin{equation*}
\left\vert (t_{2}-a)^{n-\gamma }{\large \mathcal{T}}%
_{2}z(t_{2})-(t_{1}-a)^{n-\gamma }{\large \mathcal{T}}_{2}z(t_{1})\right\vert \rightarrow 0,\text{ as }|t_{2}-t_{1}|\rightarrow 0.
\end{equation*}%
This proves that ${\large \mathcal{T}}_{2}$ is equicontinuous on $\mathbb{B}%
_{\epsilon }$. In the view of Arzela-Ascoli Theorem, it follows that $(%
{\large \mathcal{T}}_{2}\mathbb{B}_{\epsilon })$ is relatively compact. As a
consequence of Krasnosel'skii fixed point theorem, we conclude that the
problem \eqref{e8.1}-\eqref{e8.2} has at least one solution.
\end{proof}

\begin{corollary}
Assume that hypotheses (H4) and (H5) are satisfied. Then Hilfer boundary
value problem \eqref{e8.1}-\eqref{e8.2} has a unique solution in $%
C_{n-\gamma }^{\gamma }[a,b]\subset C_{n-\gamma }^{\alpha ,\beta }[a,b]$.
\end{corollary}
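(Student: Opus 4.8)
The plan is to upgrade the splitting $\mathcal{T}=\mathcal{T}_{1}+\mathcal{T}_{2}$ from the proof of Theorem~\ref{th3.4} into a genuine Banach contraction argument, which delivers uniqueness (and simultaneously re-confirms existence). Recall from Lemma~\ref{lee(1)} that a function $z\in C_{n-\gamma}^{\gamma}[a,b]$ solves the BVP \eqref{e8.1}--\eqref{e8.2} if and only if it is a fixed point of the operator $\mathcal{T}$ defined in \eqref{e8.10}. Thus it suffices to show that, under (H4) and (H5), $\mathcal{T}$ is a contraction on the ball $\mathbb{B}_{\epsilon}$ introduced in Theorem~\ref{th3.4}; since Stage~1 there already gives $\mathcal{T}(\mathbb{B}_{\epsilon})\subset\mathbb{B}_{\epsilon}$ and $C_{n-\gamma}[a,b]$ is complete, the Banach contraction principle then yields exactly one fixed point, hence a unique solution.

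The estimate I would carry out is the Lipschitz bound for $\mathcal{T}z-\mathcal{T}w$ with $z,w\in\mathbb{B}_{\epsilon}$. The terms carrying $e_{k}$ are independent of the argument and cancel, so only the two integral pieces survive. For $\mathcal{T}_{1}$ the computation is precisely Stage~2 of Theorem~\ref{th3.4}, giving $\|\mathcal{T}_{1}z-\mathcal{T}_{1}w\|_{C_{n-\gamma}}\leq \left|\tfrac{1}{1+c/d}\right|\sum_{k=1}^{n}\tfrac{(b-a)^{n-k+\alpha}}{\Gamma(\gamma-k+1)}\tfrac{\Gamma(\gamma-n+1)}{\Gamma(\alpha+1)}L\,\|z-w\|_{C_{n-\gamma}}$. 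For $\mathcal{T}_{2}$ only the Volterra integral remains; applying (H4) together with the Beta integral $\int_{a}^{t}(t-s)^{\alpha-1}(s-a)^{\gamma-n}\,ds=(t-a)^{\alpha+\gamma-n}\,\mathcal{B}(\gamma-n+1,\alpha)$ and multiplying by the weight $(t-a)^{n-\gamma}/\Gamma(\alpha)$ produces $\|\mathcal{T}_{2}z-\mathcal{T}_{2}w\|_{C_{n-\gamma}}\leq \tfrac{\Gamma(\gamma-n+1)(b-a)^{\alpha}}{\Gamma(\gamma-n+\alpha+1)}L\,\|z-w\|_{C_{n-\gamma}}$, which is the Lipschitz analogue of \eqref{q2}.

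Adding the two bounds gives $\|\mathcal{T}z-\mathcal{T}w\|_{C_{n-\gamma}}\leq \mathcal{W}\,\|z-w\|_{C_{n-\gamma}}$, and the concluding step is to verify that this combined constant is exactly the quantity $\mathcal{W}$ of hypothesis (H5). This is the part most prone to error, since it is pure Gamma--Beta bookkeeping: one uses $\mathcal{B}(\gamma-n,1)=\Gamma(\gamma-n)/\Gamma(\gamma-n+1)$, whence $\Gamma(\gamma-n)/\mathcal{B}(\gamma-n,1)=\Gamma(\gamma-n+1)$, to rewrite the common factor in $\mathcal{W}$ as $\tfrac{\Gamma(\gamma-n+1)(b-a)^{\alpha}}{\Gamma(\alpha+1)}L$; and one uses $\mathcal{B}(\gamma-n,\alpha+1)/\Gamma(\gamma-n)=\Gamma(\alpha+1)/\Gamma(\gamma-n+\alpha+1)$ to convert the $\mathcal{T}_{2}$ contribution into the second summand of $\mathcal{W}$. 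Matching these identities termwise shows the aggregate constant equals $\mathcal{W}$, which is $<1$ by (H5). Hence $\mathcal{T}$ is a contraction, its unique fixed point is the unique solution in $C_{n-\gamma}^{\gamma}[a,b]\subset C_{n-\gamma}^{\alpha,\beta}[a,b]$, and this coincides with the solution obtained in Theorem~\ref{th3.4}. The main obstacle is therefore not conceptual but the delicate reconciliation of the Beta/Gamma factors so that the summed Lipschitz constant reproduces $\mathcal{W}$ precisely.
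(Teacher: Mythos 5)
Your proposal is essentially correct, and it is worth noting that the paper gives no proof of this corollary at all --- it is stated immediately after Theorem \ref{th3.4} with no argument --- so your Banach-contraction completion is supplying something the authors left implicit rather than reproducing their reasoning. Your bookkeeping checks out: since $\mathcal{B}(\gamma-n,1)=\Gamma(\gamma-n)/\Gamma(\gamma-n+1)$, the common prefactor of $\mathcal{W}$ reduces to $\Gamma(\gamma-n+1)(b-a)^{\alpha}L/\Gamma(\alpha+1)$, the first bracketed summand then reproduces exactly the Stage~2 Lipschitz constant of $\mathcal{T}_{1}$, and $\mathcal{B}(\gamma-n,\alpha+1)/\Gamma(\gamma-n)=\Gamma(\alpha+1)/\Gamma(\gamma-n+\alpha+1)$ converts the second summand into the constant $\Gamma(\gamma-n+1)(b-a)^{\alpha}L/\Gamma(\gamma-n+\alpha+1)$ coming from the Volterra part of $\mathcal{T}_{2}$; the sum is precisely $\mathcal{W}$. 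You should also observe that (H5) automatically implies the auxiliary condition \eqref{e2} of Theorem \ref{th3.4}, since \eqref{e2} is just the first of the two positive summands of $\mathcal{W}$, which explains why the corollary can drop that hypothesis. One small refinement: state the contraction estimate for arbitrary $z,w\in C_{n-\gamma}[a,b]$ (the Lipschitz bounds in (H4) and the Beta-integral computation nowhere use membership in $\mathbb{B}_{\epsilon}$) and apply the Banach principle on the whole complete space; as written, working only on $\mathbb{B}_{\epsilon}$ yields uniqueness inside that ball, not the global uniqueness the corollary asserts. With that adjustment, and the usual appeal to Lemma \ref{lee(1)} to identify fixed points of $\mathcal{T}$ with solutions in $C_{n-\gamma}^{\gamma}[a,b]$, the argument is complete.
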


\section{An example \label{Sec5}}

Consider the Hilfer fractional differential equation with boundary condition%
\begin{equation}
\begin{cases}
D_{a^{+}}^{\alpha ,\beta }z(t)=f\big(t,z(t)\big),t\in (0,1],0<\alpha
<1,0\leq \beta \leq 1, \\
I_{a^{+}}^{1-\gamma }\big[\frac{1}{4}z(0^{+})+\frac{3}{4}z(1^{-})\big]=\frac{%
2}{5},~~\alpha \leq \gamma =\alpha +\beta -\alpha \beta ,%
\end{cases}
\label{3}
\end{equation}%
where, $\alpha =\frac{1}{2},\beta =\frac{1}{3}$, $\gamma =\frac{2}{3}$, $c=%
\frac{1}{4}$, $d=\frac{3}{4}$, $e_{1}=\frac{2}{5},$ and%
\begin{equation*}
f\big(t,z(t)\big)=t^{\frac{-1}{6}}+\frac{1}{16}t^{\frac{5}{6}}\sin z(t),
\end{equation*}%
Clearly, $t^{\frac{1}{3}}f\big(t,z(t)\big)=t^{\frac{1}{6}}+\frac{1}{16}t^{%
\frac{7}{6}}\sin z(t)\in C[0,1],$ hence $f\big(t,z(t)\big)\in C_{\frac{1}{3}%
}[0,1].$ Observe that, for any $z\in
%TCIMACRO{\U{211d} }%
%BeginExpansion
\mathbb{R}
%EndExpansion
^{+}$ and $t\in (0,1],$

\begin{eqnarray*}
\left\vert f\big(t,z(t)\big)\right\vert &\leq &t^{\frac{1}{6}}\left( 1+\frac{%
1}{16}t^{\frac{2}{3}}\left\vert t^{\frac{1}{3}}z(t)\right\vert \right) \\
&\leq &\left( 1+\frac{1}{16}\left\Vert z\right\Vert _{C_{\frac{1}{3}%
}}\right) .
\end{eqnarray*}

Therefore, the conditions (H1) is satisfied with $N=1,$ and $\zeta =\frac{1}{%
16}$. It is easy to check that the (H2)  is satisfied too. Indeed,$\ $by
some calculations, we get%
\begin{equation}
\mathcal{G}=\frac{\Gamma (\gamma -n+1)}{\Gamma (\alpha +1)}\left[
(b-a)^{\alpha }+(b-a)^{\alpha +n-\gamma }\right] N\zeta \simeq 0.19<1.
\notag
\end{equation}

An application of Theorem \ref{th8.1} implies that problem (\ref{3}) has a
solution in $C_{\frac{1}{3}}^{\frac{2}{3}}([0,1])$.

Moreover, consider $f\big(t,z(t)\big)=t^{\frac{-1}{6}}+\frac{1}{16}t^{\frac{5%
}{6}}\sin (t),$ it follows $\left\vert f\big(t,z(t)\big)\right\vert \leq t^{%
\frac{-1}{6}}+\frac{1}{16}t^{\frac{5}{6}}=\eta (t)\in C_{1-\gamma }[0,1].$
Therefore (H3) holds. An application of Theorem \ref{th3.3} implies that
problem (\ref{3}) has a solution in $C_{\frac{1}{3}}^{\frac{2}{3}}([0,1])$.

Finally, if $f\big(t,z(t)\big)=t^{\frac{-1}{6}}+\frac{1}{16}t^{\frac{5}{6}%
}\sin z(t),$ then for $z,w\in
%TCIMACRO{\U{211d} }%
%BeginExpansion
\mathbb{R}
%EndExpansion
^{+}$ and $t\in (0,1],$ we get%
\begin{equation*}
\left\vert f\big(t,z(t)\big)-f\big(t,w(t)\big)\right\vert \leq \frac{1}{16}%
\left\vert z-w\right\vert .
\end{equation*}%
Thus, the hypothesis $(H4)$ is satisfied with $L=\frac{1}{16}$. It is easy
to check that hypothesis (H5) \ and inequality (\ref{e2}) are satisfied.
Indeed,$\ $by some calculations, we get%
\begin{eqnarray*}
\mathcal{W} &:&=\bigg[\left\vert \frac{1}{1+\frac{c}{d}}\right\vert
\sum_{k=1}^{n}\frac{(b-a)^{n-k}}{\Gamma (\gamma -k+1)}+\frac{\mathcal{B}%
(\gamma -n,\alpha +1)}{\Gamma (\gamma -n)}\bigg] \\
&&\times \frac{\Gamma (\gamma -n)(b-a)^{\alpha }}{\mathcal{B}(\gamma
-n,1)\Gamma (\alpha +1)}L\simeq 0.14<1
\end{eqnarray*}%
and%
\begin{equation}
\left\vert \frac{1}{1+\frac{c}{d}}\right\vert \sum_{k=1}^{n}\frac{%
(b-a)^{n-k+\alpha }}{\Gamma (\gamma -k+1)}\frac{\Gamma (\gamma -n+1)}{\Gamma
(\alpha +1)}L\simeq 0.05<1.
\end{equation}%
An application of Theorem \ref{th3.4} implies that problem (\ref{3}) has a
solution in $C_{\frac{1}{3}}^{\frac{2}{3}}[0,1]$.

\section{Conclusion}

We have obtained some existence results for the solution of boundary value
problem for Hilfer fractional differential equations based on the reduction
of fractional differential equations to integral equations. The employed
techniques, the fixed point theorems, are quite general and effective. We
trust the reported results here will have a positive impact on the
development of further applications in engineering and applied sciences.

\end{document}